\documentclass[11pt]{article}
\usepackage[utf8]{inputenc}
\usepackage[T1]{fontenc}
\usepackage{lmodern}
\usepackage{amsmath, amssymb, amsthm, amsfonts}
\usepackage{mathtools}
\usepackage{xcolor}
\usepackage{hyperref}
\usepackage{enumitem}
\usepackage{bbm}
\usepackage{geometry}
\usepackage{graphicx}
\usepackage{subcaption}
\usepackage{microtype}

\theoremstyle{definition}
\newtheorem{theorem}{Theorem}[section]
\newtheorem{lemma}[theorem]{Lemma}
\newtheorem{proposition}[theorem]{Proposition}
\newtheorem{corollary}[theorem]{Corollary}
\newtheorem{definition}[theorem]{Definition}

\usepackage{hyperref}
\makeatletter
\renewenvironment{proof}[1][\proofname]{%
	\par\pushQED{\qed}%
	\normalfont                               
	\topsep6\p@\@plus6\p@\relax               
	\trivlist
	\item[\hskip\labelsep\bfseries #1\@addpunct{.}]\ignorespaces
}{%
	\popQED
	\endtrivlist
	\@endpefalse
}
\makeatother
\newcommand{\indic}[1]{\mathbf{1}_{#1}}
\newcommand{\fa}{a}
\newcommand{\tmin}{t_{\min}}
\newcommand{\tmax}{t_{\max}}
\title{\textbf{Preventive Maintenance of a Two-Unit Priority Standby System with Repair}}
\author{Alexandros Carballo\thanks{Universidad de La Habana, alexandros.carballo@estudiantes.matcom.uh.cu}
\and 
Marelys Crespo Navas\thanks{Université Paris-Est Créteil, marelys.crespo-navas@u-pec.fr}
\and
José E. Valdés\thanks{Universidad de La Habana, vcastro@matcom.uh.cu} }
\date{}
\usepackage[normalem]{ulem}
\begin{document}

\maketitle

\begin{abstract}
Optimal maintenance policies play an important role in the reliability analysis of repairable systems. This paper examines a two-unit priority standby system with a repair facility, where the priority unit is subject to preventive maintenance. In the general case, we derive necessary and sufficient conditions under which maintenance increases the mean time to system failure. When the hazard rate of the priority unit has either a bathtub-shaped or an upside-down bathtub-shaped form, we establish conditions for the existence of both a threshold time, beyond which maintenance improves the mean time to system failure, and the optimal maintenance time. Expressions to find these quantities are provided in terms of the hazard rate and the mean residual life function. Furthermore, stochastic ordering techniques are used to compare two independent systems under this model, along with their respective threshold times and optimal maintenance times. 
\end{abstract}
{\small
\noindent \textbf{Keywords:} optimal preventive maintenance, absolute priority system, bathtub-type hazard rate, mean residual life function, stochastic orders.}

\section{Introduction} \sloppy

Standby redundant systems with repair are widely used to enhance the reliability and availability of engineering systems. In such a setting, preventive maintenance policies can significantly extend system lifetime and reduce the occurrence of unexpected failures. Determining whether such maintenance should be implemented, as well as identifying the optimal maintenance time, is a central problem in reliability theory. A comprehensive overview of maintenance theory is provided in \cite{nakagawa2005maintenance}. More recent advances on reliability and maintenance, along with numerous references therein, can be found in \cite{kimura2023reliability}, \cite{liu2023} and \cite{zhao2025reliability}.

Within this research area, several studies have analyzed different models for the preventive maintenance of standby systems with repair. Our research  focuses on a two-unit standby priority system with repair and maintenance. This priority configuration, although without maintenance, has been studied before, see \cite{Nakagawa1975}, and more recently  \cite{li2024time,eryilmaz2022reliability}. Other studies have instead focused on two-unit standby systems with maintenance but without priority, see, for instance, \cite{ZHONG2014405,mahmoud2010two}, as well as the pioneering works \cite{nakagawa1974optimum,osaki1970two}. Only a limited number of works have addressed preventive maintenance in two-unit priority standby systems with repair; one such study is \cite{HIRATA2019183}, where the aim and the method used are very different from our approach.

To the best of our knowledge, preventive maintenance policies for priority standby systems with non-monotonic hazard rates for the main unit have not been previously investigated, and no stochastic comparison results based on stochastic orders are available for this model.

Motivated by these gaps, we study a two-unit priority standby configuration with a repair facility, where the priority unit is subject to preventive maintenance. In a general setting, we derive conditions under which maintenance improves the mean time to system failure in terms of the mean residual life function of the priority unit. Particular attention is devoted to systems whose priority unit exhibits bathtub-shaped or upside-down bathtub-shaped hazard rates, for which we characterize both the threshold time beyond which maintenance improves the mean time to system failure and the optimal maintenance time. Expressions to find these quantities are provided in terms of the hazard rate and the mean residual life function. Numerical examples are presented to illustrate the impact of the hazard rate shape on preventive maintenance decisions. In addition, stochastic comparisons between two independent configurations are developed.

The remainder of the paper is organized as follows. Section \ref{Prelim} presents the main definitions and preliminary results. Section \ref{Sec 2} presents the model and derives its mean time to failure. Section \ref{Sec 3} studies the effect of preventive maintenance and characterizes threshold and optimal maintenance times under different hazard rate structures. Section \ref{Sec 4} provides numerical illustrations of the theoretical results. Section \ref{Sec 5} develops stochastic comparisons between independent configurations under the proposed model. Section \ref{coclusiones} concludes.

\section{Definitions and basic results} \label{Prelim} 

We next introduce the classes of distributions considered in this work and the stochastic orders that will be used in the analysis.

\subsection{Aging classes and bathtub-type distributions} 

Throughout this work, all random variables are assumed to be nonnegative. Moreover, the terms \textit{increasing} and \textit{decreasing} are used in a non-strict sense; that is, increasing means non-decreasing and decreasing means non-increasing.

Consider a random variable $X$ representing a lifetime, with density function $f$ and distribution function $F$. Its survival function is given by $\overline{F}(t)= 1-F(t)$, for $t\geq 0$. The residual lifetime of $X$ at time $t$ is defined as $X_t = (X-t \mid X>t)$. 

To characterize the aging behavior of $X$, we consider two important functions in reliability theory. The first is the hazard rate, defined by 
\[
\displaystyle r(t) = \frac{f(t)}{\overline{F}(t)},
\]
whenever $\overline{F}(t)>0$. The second is the mean residual life function (MRL), defined by 
\[
\displaystyle m(t) = \mathbb{E}\left[X_t\right] = \frac{1}{\overline{F}(t)} \displaystyle \int_t^\infty \overline{F}(x)\, \mathrm{d}x,
\]
provided that $\mathbb{E}[X]<\infty$ with $\overline{F}(t)>0$.

When these two functions exhibit monotonic behavior, they give rise to well-known aging classes; see the classical reference \cite{Barlow1975}. In particular, if the hazard rate is increasing, the distribution is said to belong to the $\mathrm{IFR}$ class, denoted $X\in\mathrm{IFR}$; if the mean residual life function is decreasing, it belongs to the DMRL class, denoted $X\in\mathrm{DMRL}$. Conversely, anti-aging classes arise when the opposite monotonicity properties hold, namely the $\mathrm{DFR}$ (decreasing failure rate) and $\mathrm{IMRL}$ (increasing mean residual life) classes. Moreover, the inclusions $\mathrm{IFR} \subset \mathrm{DMRL}$ and $\mathrm{DFR} \subset \mathrm{IMRL}$ are satisfied.

Although much of the literature has focused on monotonic aging, many systems exhibit non-monotonic aging behavior, typically modeled by distributions with bathtub-shaped hazard rates. A comprehensive review of such distributions is provided in \cite{LAI200169}. Since multiple definitions exist, the following one is adopted in this work.

\begin{definition}[Bathtub hazard rate] \label{BFR} Let $X$ be a random variable with hazard rate $r$, continuous and well defined for all $t\geq 0$. We say that $X$ has a bathtub-shaped hazard rate, and we write $X \in \mathrm{BFR}$, if there exists a value $\tmin \geq 0$ such that $r(t)$ is decreasing for $0\leq t\leq \tmin$, and increasing for $t\geq \tmin$. If $\tmin=0$, then $r(t)$ is increasing for all $t \geq 0$, in which case $X \in \mathrm{IFR}$. 
\end{definition} 

Analogously, it is natural to consider distributions whose hazard rate exhibits an upside-down bathtub shape. 

\begin{definition}[Upside-down bathtub hazard rate]
Let $X$ be a random variable with hazard rate $r$, continuous and well defined for all $t\geq 0$. We say that $X$ has an upside-down bathtub-shaped hazard rate, and we write $X \in \mathrm{UBFR}$ if there exists a value $\tmax > 0$ such that $r(t)$ increases for $0\leq t \leq \tmax$ and decreases for $t\geq \tmax$.
\end{definition}

Note that if $\tmax = 0$, then the hazard rate $r(t)$ is decreasing for all $t \geq 0$, which means that $X$ belongs to the class $\mathrm{DFR}$. In this sense, the DFR class can be viewed as a particular case of UBFR distributions. However, this situation will be analyzed separately, and whenever we refer to UBFR distributions, we implicitly assume that $\tmax > 0$.

Several well-known distributions exhibit an upside-down bathtub-shaped hazard rate, including the lognormal and certain Pareto-type distributions. From a practical point of view, this type of behavior describes a unit that is more likely to fail at the beginning, for instance, due to initial defects or adjustment effects. As time goes on, the hazard rate first increases as these issues manifest, reaches a peak, and then decreases as the unit stabilizes and operates under more regular conditions.

\subsection{Stochastic orders}\label{Stochastic orders}
Stochastic orders provide a useful framework for comparing the performance of stochastic systems under different probabilistic criteria; see, for example, \cite{belzunce2015introduction,kochar2022stochastic}. Below, we introduce several orders that will be used throughout the paper. For a detailed exposition, the reader is referred to \cite{Shaked2007}, a fundamental reference in this area.

\begin{definition}[Stochastic orders] \label{stoch ord} Let $X$ and $Y$ be two non-negative random variables with survival functions $\overline{F}$ and $\overline{G}$, and well-defined mean residual life functions $m_X$ and $m_Y$. We say that $X$ dominates $Y$ in the
\begin{enumerate}
\item[$a)$] Usual stochastic order, denoted by $X\geq_{\mathrm{st}} Y$, if $\overline{F}(t) \geq \overline{G}(t),$ for all $t\geq 0$.

\item[$b)$] Laplace transform order, denoted by $X\geq_{\mathrm{Lt}} Y$, if 
\[ 
\widehat{\overline{F}}(s) = \int_{0}^{+\infty} \mathrm{e}^{-st}\,\overline{F}(t)\,\mathrm{d}t \geq   \int_{0}^{+\infty} \mathrm{e}^{-st}\,\overline{G}(t)\,\mathrm{d}t = \widehat{\overline{G}}(s), \quad \text{for all } s\geq 0. \]
The functions $\widehat{\overline{F}}$ and $\widehat{\overline{G}}$ are the Laplace transforms of $\overline{F}$ and $\overline{G}$, respectively.

\item[$c)$] Mean residual life order, denoted by $X\geq_{\mathrm{mrl}} Y$, if $m_X(t) \geq m_Y(t)$, for all $t\geq 0$, whenever the expectations exist. 
\end{enumerate}
\end{definition}

It is well known that the usual stochastic order implies the Laplace transform order. However, in general, the mean residual life order is not comparable with the other two orders.

A useful characterization of the usual stochastic order is the following: $X\geq_{\mathrm{st}} Y$ if and only if
\begin{equation}\label{st}
\mathbb{E}\left[\phi(X)\right] \geq \mathbb{E}\left[\phi(Y)\right],
\end{equation}
for every increasing function $\phi$ such that the expectations exist.

\section{System description and mean time to failure}\label{Sec 2}

Let us consider a system with two units and one repair facility. Unit~1 is referred to as the priority (or main) unit, while unit~2 is the standby unit. The priority unit operates whenever it is available, whereas the standby unit remains in a cold standby state and becomes operative only when the priority unit is unavailable due to failure or maintenance.

At the initial time, the priority unit begins operation, while the standby unit remains in cold standby. A preventive maintenance policy is applied to the priority unit, under which maintenance is scheduled after a predetermined time $\mathrm{T}>0$ from the start of operation. If the unit fails before the scheduled maintenance epoch, the preventive maintenance is not performed.

If the standby unit fails while the priority unit is still under repair or maintenance, a system failure occurs. Otherwise, once the repair or maintenance activity is completed, the priority unit resumes operation and the standby unit returns to its standby state. This operational cycle continues until the system failure. All repair actions restore the failed unit to an ``as good as new'' condition, and all switching operations between units are assumed to be perfect and instantaneous.

The lifetimes of the priority and standby units are denoted by $X_1$ and $X_2$, respectively. The distribution function, survival function, hazard rate, mean residual life function, and finite expectation of $X_1$ are denoted by $F$, $\overline{F}$, $r$, $m$, and $\fa$, respectively. We will assume that $F$ is differentiable, that $F(0)=0$, and that $\overline{F}(t)>0$ holds for all $t>0$.

The standby unit is assumed to follow an exponential distribution with a hazard rate $\lambda>0$. The repair and preventive maintenance times of the priority unit are denoted by $Y_1$ and $Y_2$, with distribution functions $G_1$ and $G_2$, respectively. We will assume that $G_i(0)=0$ for $i=1,2$. The lifetimes, repair times, and preventive maintenance times are assumed to be mutually independent.

In this paper, the terms \emph{time to system failure} and \emph{system lifetime} are used interchangeably, both denoting the random variable representing the duration from system start-up until failure. Formally, we write $\tau(\mathrm{T})$ to denote this quantity. For each cycle $i\ge1$, assume that $X_{ji}=_{\mathrm{st}}X_j$ and $Y_{ji}=_{\mathrm{st}} Y_j$ for $j=1,2$, and that all these random variables are mutually independent. Let $\nu$ denote the number of cycles completed before system failure.

From the description of the model, the time to system failure can be expressed as
\[
\tau(\mathrm{T}) = \sum_{i=1}^{\nu} \left( X_{1i} \wedge \mathrm{T} + ( X_{2i} \wedge Y_{1i} ) \,\mathbbm{1}_{\{X_{1i}\leq \mathrm{T} \}} + ( X_{2i} \wedge Y_{2i})\,\mathbbm{1}_{\{X_{1i}>\mathrm{T} \}} \right),
\]
where each summand represents the duration of the $i$-th operating cycle. In the expression above, $\wedge$ denotes the minimum operator, and $\mathbbm{1}$ denotes the indicator function. The random variable $\nu$ follows a geometric distribution with parameter
\[
p(\mathrm{T}) = \mathbb{P}\left(X_2 < Y_1\mathbbm{1}_{\{X_1 \leq \mathrm{T}\}} + Y_2 \,\mathbbm{1}_{\{X_1>\mathrm{T}\}} \right) = \lambda \left( \mu_1 - \Delta \mu \,\overline{F}(\mathrm{T})\right),
\]
where, for $i=1,2$, $\mu_i = \mathbb{E}[X_2\wedge Y_i]$, and $\Delta \mu = \mu_1 - \mu_2$.

Since the operating cycles are independent and identically distributed, and the stopping time $\nu$ depends only on the outcomes of the preceding cycles, Wald's identity yields the mean time to system failure
\begin{equation} \label{mean time - priority and PM system}
\mathcal{M}(\mathrm{T}) = \mathbb{E}\!\left[\tau(\mathrm{T})\right] = \frac{1}{\lambda}\left(1 + \frac{\int_{0}^{\mathrm{T}} \overline{F}(t)\,\mathrm{d}t}{\mu_1 - \Delta \mu \,\overline{F}(\mathrm{T})}\right).
\end{equation}

When no preventive maintenance is performed, the mean time to failure is given by
\begin{equation}\label{mean time - priority system}
\mathcal{M}(\infty) = \frac{1}{\lambda}\left(1+\frac{\fa}{\mu_1 }\right),
\end{equation}
which is a well-known result in reliability theory; see, for instance, \cite{Nakagawa1975}.

\section{Threshold time and optimal maintenance time}\label{Sec 3}

Prior to analyzing the value of $\mathrm{T}$ that maximizes the mean time to system failure, we derive necessary and sufficient conditions that guarantee preventive maintenance produces a strict improvement, that is,
\[
\mathcal{M}(\mathrm{T})>\mathcal{M}(\infty).
\]

Maintenance times are commonly assumed to be shorter than repair times, in the sense that $Y_1\geq_{\mathrm{st}} Y_2$, that is, $G_1(t)\leq G_2(t)$ for all $t\geq 0$; see, for example, \cite{nakagawa2005maintenance}. However, the following proposition shows that the weaker condition $\Delta \mu>0$ is necessary for preventive maintenance to be beneficial.

\begin{proposition}
 Let $\mathrm{T} > 0$. A necessary condition for $\mathcal{M}(\mathrm{T}) > \mathcal{M}(\infty)$ is $\Delta \mu > 0$.
\end{proposition}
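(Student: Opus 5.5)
We argue by contraposition: assuming $\Delta\mu \le 0$, we show that $\mathcal{M}(\mathrm{T}) \le \mathcal{M}(\infty)$ for every $\mathrm{T}>0$. By the expressions for $\mathcal{M}(\mathrm{T})$ and $\mathcal{M}(\infty)$ in Section~\ref{Sec 2}, after removing the common factor $1/\lambda$ and the additive constant $1$, it suffices to prove
\[
\frac{\int_0^{\mathrm{T}} \overline{F}(t)\,\mathrm{d}t}{\mu_1 - \Delta\mu\,\overline{F}(\mathrm{T})} \le \frac{\fa}{\mu_1}.
\]

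First, both denominators are positive. Indeed, $\mu_1 = \mathbb{E}[X_2\wedge Y_1] > 0$ because $X_2$ is exponential and $G_1(0)=0$, so $X_2\wedge Y_1>0$ almost surely. The denominator on the left equals $\mu_1 F(\mathrm{T}) + \mu_2 \overline{F}(\mathrm{T})$. This is a convex combination of the positive numbers $\mu_1$ and $\mu_2$, so it is positive as well.

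Next, under $\Delta\mu\le 0$ we have $-\Delta\mu\,\overline{F}(\mathrm{T}) \ge 0$, so the left denominator is at least $\mu_1$. Hence
\[
\frac{\int_0^{\mathrm{T}} \overline{F}}{\mu_1 - \Delta\mu\,\overline{F}(\mathrm{T})} \le \frac{\int_0^{\mathrm{T}} \overline{F}}{\mu_1} \le \frac{\int_0^{\infty}\overline{F}}{\mu_1} = \frac{\fa}{\mu_1}.
\]
The last inequality holds because $\overline{F}\ge 0$, and the final equality because $\fa = \mathbb{E}[X_1] = \int_0^\infty \overline{F}$.

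Therefore $\mathcal{M}(\mathrm{T})\le\mathcal{M}(\infty)$, which contradicts the strict improvement $\mathcal{M}(\mathrm{T})>\mathcal{M}(\infty)$. So $\Delta\mu>0$ is necessary. The only step needing care is the positivity of the denominators, which we have already checked.
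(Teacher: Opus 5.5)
Your proof is correct and is essentially the paper's argument. The paper clears denominators to rewrite $\mathcal{M}(\mathrm{T})>\mathcal{M}(\infty)$ as $\mu_1\int_{\mathrm{T}}^\infty \overline{F}(t)\,\mathrm{d}t < \fa\,\Delta\mu\,\overline{F}(\mathrm{T})$ and notes that the left side is positive. That is the same comparison you carry out contrapositively as a chain of inequalities, and your explicit check that the denominators are positive (via $G_1(0)=0$ and the exponential $X_2$) fills in a step the paper uses tacitly.
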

\begin{proof}
From \eqref{mean time - priority and PM system} and \eqref{mean time - priority system}, the inequality  $\mathcal{M}(\mathrm{T}) > \mathcal{M}(\infty)$ is equivalent to
\begin{align} \label{eq: beneficial maintenance}
	\mu_1 \int_{\mathrm{T}}^\infty \overline{F}(t)\,\mathrm{d}t < \fa \,\Delta\mu\, \overline{F}\left(\mathrm{T}\right). 
\end{align}
Since the left-hand side of \eqref{eq: beneficial maintenance} is strictly positive, this inequality can hold only if $\Delta \mu > 0$. 
\end{proof}

From now on, we write 
\begin{equation}\label{formule K}
K= \frac{\fa \, \Delta \mu}{\mu_1}.
\end{equation}
In the next proposition, we provide a necessary and sufficient condition for the mean time to system failure under preventive maintenance to exceed that of the system without maintenance. It follows that \eqref{eq: beneficial maintenance} can be equivalently rewritten as
\[
m(\mathrm{T}) < K.
\]
We then state the result formally.
\begin{proposition}\label{prop: beneficial maintenance}
Let $\mathrm{T}>0$ and $\Delta \mu>0$. Then $\displaystyle \mathcal{M}(\mathrm{T})> \mathcal{M}(\infty)$ if and only if $m(\mathrm{T}) < K$.
\end{proposition}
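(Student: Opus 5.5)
The plan is to reduce the inequality $\mathcal{M}(\mathrm{T})>\mathcal{M}(\infty)$ to the equivalent form \eqref{eq: beneficial maintenance}, exactly as in the proof of the previous proposition. Then divide by the positive quantities $\mu_1$ and $\overline{F}(\mathrm{T})$, and recognize the mean residual life function.

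First, I compare \eqref{mean time - priority and PM system} with \eqref{mean time - priority system}. After cancelling the common factor $1/\lambda$ and the additive constant $1$, the inequality becomes
\[
\frac{\int_0^{\mathrm{T}}\overline{F}(t)\,\mathrm{d}t}{\mu_1-\Delta\mu\,\overline{F}(\mathrm{T})}>\frac{\fa}{\mu_1}.
\]
To cross-multiply, I need both denominators positive. Clearly $\mu_1>0$, since $X_2$ is exponential and $G_1(0)=0$, so $X_2\wedge Y_1>0$ almost surely. For the other denominator, note that
\[
\mu_1-\Delta\mu\,\overline{F}(\mathrm{T})=\mu_1F(\mathrm{T})+\mu_2\overline{F}(\mathrm{T}).
\]
This is a convex combination of $\mu_1,\mu_2>0$, so it is positive. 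Checking this positivity is the only point needing care; it is routine here, and it is what makes the manipulation an equivalence rather than a one-way implication.

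Cross-multiplying gives
\[
\mu_1\int_0^{\mathrm{T}}\overline{F}(t)\,\mathrm{d}t>\fa\,\mu_1-\fa\,\Delta\mu\,\overline{F}(\mathrm{T}).
\]
Since $\fa=\int_0^\infty\overline{F}(t)\,\mathrm{d}t$, subtracting yields
\[
\mu_1\int_{\mathrm{T}}^\infty\overline{F}(t)\,\mathrm{d}t<\fa\,\Delta\mu\,\overline{F}(\mathrm{T}),
\]
which is \eqref{eq: beneficial maintenance}. Finally, $\overline{F}(\mathrm{T})>0$ for $\mathrm{T}>0$ by the standing assumption, and $\mu_1>0$. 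Dividing both sides by $\mu_1\overline{F}(\mathrm{T})$ and using the definition of $m$ gives $m(\mathrm{T})<\fa\Delta\mu/\mu_1=K$, as in \eqref{formule K}.

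Every step is an equivalence, so the converse follows by reading the chain backwards. The hypothesis $\Delta\mu>0$ is not needed for the algebra itself. It only guarantees $K>0$, so the condition is not vacuous, consistent with the preceding proposition.
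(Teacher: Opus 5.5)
Your proof is correct and follows the paper's route: the paper obtains the equivalence of $\mathcal{M}(\mathrm{T})>\mathcal{M}(\infty)$ with \eqref{eq: beneficial maintenance} directly from \eqref{mean time - priority and PM system} and \eqref{mean time - priority system}, and then divides by $\mu_1\overline{F}(\mathrm{T})>0$ to get $m(\mathrm{T})<K$. Your explicit check that $\mu_1-\Delta\mu\,\overline{F}(\mathrm{T})=\mu_1F(\mathrm{T})+\mu_2\overline{F}(\mathrm{T})>0$ is left implicit in the paper, and it is what justifies treating the cross-multiplication as an equivalence.
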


When the mean residual life of the main unit is increasing, the priority unit exhibits an anti-aging pattern, and the implementation of a preventive maintenance policy reduces the mean time to system failure. This result is established in the following proposition. In particular, preventive maintenance is non-beneficial when $X_1\in\mathrm{DFR}$. 

\begin{proposition}\label{prop: imrl}
Let $X_1\in\mathrm{IMRL}$. Then $\displaystyle \mathcal{M}(\mathrm{T})<\mathcal{M}(\infty)$, for any $\mathrm{T}> 0$.
\end{proposition}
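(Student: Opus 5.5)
We show that the strict inequality in \eqref{eq: beneficial maintenance} reverses, so that $\mathcal{M}(\mathrm{T})<\mathcal{M}(\infty)$ for every $\mathrm{T}>0$. Clearing the positive denominators in \eqref{mean time - priority and PM system} and \eqref{mean time - priority system} gives
\[
\mathcal{M}(\mathrm{T})<\mathcal{M}(\infty)
\iff
\mu_1 \int_{\mathrm{T}}^\infty \overline{F}(t)\,\mathrm{d}t > \fa\,\Delta\mu\,\overline{F}(\mathrm{T}).
\]
Since $\overline{F}(\mathrm{T})>0$, we may divide by it, and this becomes $\mu_1 m(\mathrm{T}) > \fa\,\Delta\mu$.

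We split into two cases according to the sign of $\Delta\mu$.

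If $\Delta\mu\le 0$, the right-hand side is nonpositive. The left-hand side is strictly positive, because $\mu_1>0$ and $\int_{\mathrm{T}}^\infty\overline{F}>0$, the latter since $\overline{F}>0$ everywhere. The inequality therefore holds automatically. This is the same observation used in the first proposition.

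If $\Delta\mu>0$, the target inequality is $m(\mathrm{T})>K$. Here we use the IMRL hypothesis: $m(\mathrm{T})\ge m(0)=\fa$. Moreover
\[
K=\fa\,\frac{\mu_1-\mu_2}{\mu_1}<\fa,
\]
because $\mu_2=\mathbb{E}[X_2\wedge Y_2]>0$. This positivity holds since $X_2$ is exponential and $G_2(0)=0$, so both variables are almost surely positive. Combining the two bounds gives $m(\mathrm{T})\ge\fa>K$, and Proposition \ref{prop: beneficial maintenance} (its negated form, with strictness) yields the claim.

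The only delicate point is obtaining strict inequality, and it comes from $K<\fa$ strictly, that is, from $\mu_2>0$. So the main obstacle is simply to justify $\mu_2>0$ and $m(0)=\fa$ carefully. The value $m(0)=\fa$ follows from $F(0)=0$. The IMRL property gives only the non-strict bound $m(\mathrm{T})\ge m(0)$, which is all we need.
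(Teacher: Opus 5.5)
Your proof is correct and is essentially the paper's argument: IMRL gives $m(\mathrm{T})\ge m(0)=\fa>K$, which is exactly the strict reverse of \eqref{eq: beneficial maintenance}, and the paper avoids your case split by noting that $\fa>K$ holds whatever the sign of $\Delta\mu$, because $\mu_2>0$. The strictness comes from the equivalence you derived in your first paragraph, not from negating Proposition \ref{prop: beneficial maintenance}, which on its own would only give $\mathcal{M}(\mathrm{T})\le\mathcal{M}(\infty)$.
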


\begin{proof}
Condition $X_1\in\text{IMRL}$ implies that, for any $\mathrm{T}>0$, 
\[
m(\mathrm{T}) \geq m(0) = \fa >  K.
\]
For this reason,
\[
\mu_1 \int_{\mathrm{T}}^\infty \overline{F}(t)\,\mathrm{d}t > \fa\, \Delta\mu\, \overline{F}\left(\mathrm{T}\right),
\]
and therefore $\mathcal{M}(\mathrm{T})<\mathcal{M}(\infty)$. This result holds independently of the sign of $\Delta \mu$.
\end{proof}

We now investigate the existence of a threshold time $\mathrm{T}_0$, for which preventive maintenance increases the expected system lifetime:
\[
\mathrm{T}_0 = \inf\left\{ \mathrm{T}>0 : \mathcal{M}(\mathrm{T})>\mathcal{M}(\infty) \right\}.
\]
By definition, preventive maintenance is not beneficial for $\mathrm{T} < \mathrm{T}_0$. However, the condition $\mathrm{T} > \mathrm{T}_0$ does not ensure a positive effect; $\mathrm{T}_0$ merely defines the lower bound of the set of times for which maintenance may become beneficial.

Using Proposition \ref{prop: beneficial maintenance}, we obtain the following result.

\begin{proposition}\label{prop : T_0 general}
Let $\Delta \mu>0$. Then $\mathrm{T}_0>0$ exists if and only if $\displaystyle \inf_{\mathrm{T}>0} m(\mathrm{T}) < K$. In that case,
\begin{align}\label{formule_T0}
\mathrm{T}_0=\inf\left\{\mathrm{T}>0 : m(\mathrm{T})<K\right\}.
\end{align}
\end{proposition}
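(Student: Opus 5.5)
The plan is to reduce everything to Proposition \ref{prop: beneficial maintenance}. Since $\Delta\mu>0$, that result says that for every $\mathrm{T}>0$,
\[
\mathcal{M}(\mathrm{T})>\mathcal{M}(\infty) \iff m(\mathrm{T})<K .
\]
Hence the two index sets coincide:
\[
\{\mathrm{T}>0:\mathcal{M}(\mathrm{T})>\mathcal{M}(\infty)\}=\{\mathrm{T}>0: m(\mathrm{T})<K\}=:A.
\]
Taking infima immediately gives formula \eqref{formule_T0} whenever $\mathrm{T}_0$ is well defined. So the remaining content is to interpret ``$\mathrm{T}_0>0$ exists'' as ``$A$ is nonempty and its infimum is strictly positive'' and to check that this holds exactly when $\inf_{\mathrm{T}>0} m(\mathrm{T})<K$.

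\emph{Nonemptiness.} If $A\neq\emptyset$, pick $\mathrm{T}\in A$. Then $\inf m\le m(\mathrm{T})<K$. Conversely, if $\inf_{\mathrm{T}>0} m(\mathrm{T})<K$, the definition of infimum yields some $\mathrm{T}>0$ with $m(\mathrm{T})<K$, so $A\neq\emptyset$. This part is purely set-theoretic.

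\emph{Strict positivity.} The point needing care is that $\inf A$ could a priori equal $0$. I would use continuity of $m$ at $0$. Since $F$ is differentiable, $\overline{F}$ is continuous with $\overline{F}(0)=1$, and $\mathrm{T}\mapsto\int_{\mathrm{T}}^\infty\overline{F}(t)\,\mathrm{d}t$ is continuous because $\fa<\infty$. Therefore $m(\mathrm{T})=\overline{F}(\mathrm{T})^{-1}\int_{\mathrm{T}}^\infty\overline{F}(t)\,\mathrm{d}t$ is continuous on $[0,\infty)$, and $m(\mathrm{T})\to m(0)=\fa$ as $\mathrm{T}\to0^+$. Next, $\Delta\mu>0$ together with $\mu_2=\mathbb{E}[X_2\wedge Y_2]>0$ (since $G_2(0)=0$ and $X_2$ is exponential) gives $\Delta\mu<\mu_1$, hence $K=\fa\,\Delta\mu/\mu_1<\fa$. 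This is the same observation used in the proof of Proposition \ref{prop: imrl}. By continuity there is $\delta>0$ with $m(\mathrm{T})>K$ on $(0,\delta)$. Thus $A\subset[\delta,\infty)$ and $\mathrm{T}_0\ge\delta>0$.

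The step I expect to be the main (mild) obstacle is this positivity argument, specifically justifying $\mu_2>0$ so that $K<\fa$ strictly. I would handle it by writing $\mu_2=\int_0^\infty e^{-\lambda t}\,\overline{G}_2(t)\,\mathrm{d}t$. This integral is strictly positive because $\overline{G}_2(t)>0$ for small $t$, by right-continuity and $G_2(0)=0$.
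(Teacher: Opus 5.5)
Your proposal is correct and follows essentially the same route as the paper: you identify the two index sets via Proposition \ref{prop: beneficial maintenance}, equate nonemptiness of $\{\mathrm{T}>0 : m(\mathrm{T})<K\}$ with $\inf_{\mathrm{T}>0} m(\mathrm{T})<K$, and obtain $\mathrm{T}_0>0$ from continuity of $m$ together with $m(0)=\fa>K$. Your additional justification that $K<\fa$ holds strictly, via $\mu_2>0$, fills in a step the paper simply asserts.
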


\begin{proof}
By Proposition \ref{prop: beneficial maintenance},
\[
\mathrm{T}_0=\inf\left\{\mathrm{T}>0 : \mathcal{M}(\mathrm{T})>\mathcal{M}(\infty)\right\}
=\inf\left\{\mathrm{T}>0 : m(\mathrm{T})<K\right\}.
\]
Hence, $\mathrm{T}_0$ exists if and only if the set $A_K=\{\mathrm{T}>0 : m(\mathrm{T})<K\}$ is non-empty, which is equivalent to
\[
\inf_{\mathrm{T}>0} m(\mathrm{T}) < K .
\]

Finally, since $m$ is continuous and $m(0)=\fa>K$, the inequality $m(\mathrm{T})<K$ cannot hold for $\mathrm{T}$ arbitrarily close to $0$. Therefore, $\mathrm{T}_0=\inf A_K>0$.
\end{proof}

Assuming that maintenance is beneficial, that is, that there exists $\mathrm{T}_0>0$, we now examine the existence of an optimal maintenance time, denoted by $\mathrm{T}^*$, and defined as 
\begin{equation}\label{eq: formule T*}
\mathrm{T}^{*} = \arg\max_{\mathrm{T}>\mathrm{T}_0} \mathcal{M}(\mathrm{T}). 
\end{equation}
In particular, we consider two cases in which the hazard rate $r$ is bathtub-shaped and upside-down bathtub-shaped, respectively.

\subsection{Bathtub-shaped hazard rate}\label{Sec 3.1}

We recall the following result from \cite{Olcay1995}. In that work, the author adopts a different definition of the bathtub hazard rate, assuming strict monotonicity. The lemma, however, can be established analogously under Definition \ref{BFR}.

\begin{lemma}[Theorem 1 in \cite{Olcay1995}]\label{Lemma Olcay BFR}
Let $X_1 \in \mathrm{BFR}$.
\begin{enumerate}
	\item[$a)$] If $r(0)\leq \frac{1}{\fa}$, then $X_1\in\mathrm{DMRL}$.
	\vspace{0em}
	\item[$b)$] If $r(0)>\frac{1}{\fa}$, then there exists $c\in(0,\tmin)$ such that $m$ is increasing on $[0,c]$ and decreasing on $[c,\infty)$.
\end{enumerate}
\end{lemma}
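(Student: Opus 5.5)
The plan is to study the sign of $m'$ through the identity $m'(t)=r(t)m(t)-1$, which follows by differentiating $m(t)\overline{F}(t)=\int_t^\infty\overline{F}(x)\,\mathrm{d}x$. So $m$ increases exactly where $r(t)m(t)>1$ and decreases where $r(t)m(t)<1$. It is therefore enough to control the auxiliary function $g(t)=1/r(t)$ against $m(t)$, or equivalently to study the function
\[
\psi(t)=\int_t^\infty \overline{F}(x)\,\mathrm{d}x-\frac{\overline{F}(t)}{r(t)},
\]
whose sign equals that of $m'(t)$ (since $\psi = \overline{F}\,(m-1/r)$ and $m' = r(m-1/r)$ when $r>0$). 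At $t=0$ we have $m(0)=\fa$, so $m'(0)=r(0)\fa-1$. This already separates the two cases: in case $a)$ $m'(0)\le 0$, and in case $b)$ $m'(0)>0$.

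On $[\tmin,\infty)$ the hazard is increasing, so the residual life $X_t$ for $t\ge \tmin$ has increasing hazard. Hence $m$ is decreasing on $[\tmin,\infty)$, using $\mathrm{IFR}\subset\mathrm{DMRL}$ applied to the residual lifetime at $\tmin$. It remains to handle $[0,\tmin]$, where $r$ is decreasing.

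The key step is to show that on $[0,\tmin]$ the sign of $m'$ can change at most once, and only from $+$ to $-$. I would use the standard representation
\[
m(t)=\int_0^\infty \exp\Big(-\int_t^{t+u} r(s)\,\mathrm{d}s\Big)\,\mathrm{d}u .
\]
Suppose $m'(t_0)=0$ at some $t_0<\tmin$, i.e.\ $r(t_0)m(t_0)=1$. I would show that $m'(t)\le 0$ for all $t\ge t_0$. Suppose instead that $m'$ becomes positive again at some later $t_1\in(t_0,\tmin)$. Since $r$ is decreasing, $r(t_1)\le r(t_0)$, while $m(t_1)>m(t_0)$ would be needed. Comparing $r(t)m(t)$ directly is awkward, so I would instead argue via $\psi$. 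On the region where $r$ is differentiable, $\psi'(t)=\overline{F}(t)\,r'(t)/r(t)^2$, which is $\le 0$ on $[0,\tmin]$ and $\ge 0$ after. So $\psi$ is decreasing then increasing, with $\psi(t)\to 0$ as $t\to\infty$ (assuming $\overline{F}/r\to 0$, which holds since $r$ is eventually increasing and positive). Because $\psi$ increases to $0$ on $[\tmin,\infty)$, we get $\psi\le 0$ there. On $[0,\tmin]$, $\psi$ is decreasing, so it has at most one sign change, from positive to negative. In case $a)$, $\psi(0)=\fa-1/r(0)\le 0$, hence $\psi\le 0$ everywhere, which gives $m'\le 0$ and $X_1\in\mathrm{DMRL}$. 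In case $b)$, $\psi(0)>0$ and $\psi(\tmin)\le 0$, so by continuity and monotonicity there is $c\in(0,\tmin]$ with $\psi>0$ on $[0,c)$ and $\psi\le0$ on $[c,\infty)$. This yields the claimed unimodal shape of $m$.

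The main obstacle is regularity: Definition~\ref{BFR} only assumes $r$ continuous and monotone on pieces, not differentiable. I would avoid $r'$ by writing $\overline{F}/r$ through the monotonicity of $1/r$ directly, or by a Stieltjes argument, since $1/r$ is monotone and hence of bounded variation. Alternatively one can check monotonicity of $\psi$ on intervals by comparing $\psi(t_2)-\psi(t_1)=-\int_{t_1}^{t_2}\overline{F}-[\overline{F}/r]_{t_1}^{t_2}$ with $\int_{t_1}^{t_2}\overline{F}(x)\,\mathrm{d}(1/r)(x)$. Two further points need care. One is the degenerate case $r=0$, where $1/r$ is infinite; this is excluded in case $b)$ near $0$ and otherwise handled by working with $\psi\,r$. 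The other is showing $c<\tmin$ strictly. For that, note that if $\psi(\tmin)=0$ then $\psi\equiv 0$ on $[\tmin,\infty)$, which forces $r$ to be constant there. In that case I would accept $c\le\tmin$, or pick $c$ as the first zero, which lies strictly below $\tmin$ whenever $r$ is not constant on $[c,\tmin]$.
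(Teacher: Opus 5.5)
Your argument is correct, and it gives a self-contained proof of something the paper does not prove. The paper only imports this lemma from the cited reference, where the bathtub shape is assumed strictly monotone, and asserts that the non-strict version under Definition~\ref{BFR} ``can be established analogously.''

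Your function $\psi=\overline{F}\,(m-1/r)$ is exactly what carries that out. It has the sign of $m'$ and satisfies $\mathrm{d}\psi=-\overline{F}\,\mathrm{d}(1/r)$. So it inherits the decreasing-then-increasing pattern of $r$ and tends to $0$ at infinity. Hence it changes sign at most once, from $+$ to $-$, and $\psi(0)=\fa-1/r(0)$ decides between cases $a)$ and $b)$. Your separate IFR$\subset$DMRL step on $[\tmin,\infty)$ is therefore redundant.

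For the regularity issue you do not need Stieltjes integrals. For $s<t$ with $r(s),r(t)>0$,
\[
\psi(t)-\psi(s)=\overline{F}(t)\Big(\frac{1}{r(s)}-\frac{1}{r(t)}\Big)+\int_s^t\Big(\frac{r(x)}{r(s)}-1\Big)\overline{F}(x)\,\mathrm{d}x .
\]
Both terms are $\le 0$ when $r$ decreases on $[s,t]$ and $\ge 0$ when it increases. This is the same device the paper uses for $\varphi$ in the proof of Proposition~\ref{prop : T opt}.

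Your caution about $c<\tmin$ is well founded, but your last sentence has it backwards. If $\psi(\tmin)=0$, then $\psi\equiv 0$ on $[\tmin,\infty)$. The first zero $c$ of $\psi$ lies strictly below $\tmin$ only when $\psi\equiv 0$ on $[c,\tmin]$, that is, when $r$ is constant on a left neighbourhood of $\tmin$. When $r$ is strictly decreasing up to $\tmin$ and constant afterwards, the first zero is $\tmin$ itself.

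A concrete case: take $r(t)=\max\{2-t,1\}$ with $\tmin=1$. Then $m\equiv 1$ on $[1,\infty)$ and $m$ is strictly increasing on $[0,1]$. Also $\fa\approx 0.70$, so $r(0)=2>1/\fa$, yet no $c\in(0,\tmin)$ works.

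So under the non-strict Definition~\ref{BFR} the correct conclusion is $c\in(0,\tmin]$. Equivalently, $c<\tmin$ holds after enlarging $\tmin$, which is admissible because $r$ is then constant on $[\tmin,\infty)$. This does not affect the paper's later uses, which only need the unimodal shape of $m$.
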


Lemma \ref{Lemma Olcay BFR} shows that a bathtub-shaped hazard rate imposes a specific structure on the mean residual life function, depending on the value of $r(0)=f(0)$. By combining this result with Propositions \ref{prop: beneficial maintenance} and \ref{prop : T_0 general}, one can derive a necessary and sufficient condition for the existence of $\mathrm{T}_0$. Moreover, for any $\mathrm{T}>\mathrm{T}_0$, performing maintenance at time $\mathrm{T}$ yields an increase in the expected lifetime of the system.

\begin{proposition} \label{prop : T_0}
Let $\displaystyle \Delta \mu > 0$ and $X_1\in\mathrm{BFR}$. Then $\mathrm{T}_0>0$ exists if and only if	
\begin{align}\label{condition T0}
	\lim\limits_{t\rightarrow\infty} m(t) < K \quad  \text{or, equivalently,}\quad \lim\limits_{t\rightarrow \infty}	r(t) > \frac{1}{K}.
\end{align}
Moreover, $\mathrm{T}_0$ satisfies  
\begin{align}\label{formule T0} 
    \mathrm{T}_0=\inf\left\{ \mathrm{T}> 0 : m(\mathrm{T})<K \right\} = \sup\left\{ \mathrm{T}> 0 : m(\mathrm{T})=K \right\}
\end{align}
and $\mathcal{M}(\mathrm{T})> \mathcal{M}(\infty)$ for all $\mathrm{T}> \mathrm{T}_0$.
\end{proposition}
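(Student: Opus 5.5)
The approach combines Lemma \ref{Lemma Olcay BFR} with Propositions \ref{prop: beneficial maintenance} and \ref{prop : T_0 general}. The first step establishes that under BFR the mean residual life $m$ is eventually decreasing. In case a) of Lemma \ref{Lemma Olcay BFR}, $m$ is decreasing on $[0,\infty)$. In case b), $m$ increases on $[0,c]$ and decreases on $[c,\infty)$. In both cases $m_\infty:=\lim_{t\to\infty}m(t)$ exists and equals $\inf_{\mathrm{T}>0}m(\mathrm{T})$: in case a) this is immediate. In case b), $m(t)\ge m(0)=\fa$ on $[0,c]$, while $m$ on $[c,\infty)$ decreases to $m_\infty\le m(c)$. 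Hence the infimum over $(0,\infty)$ is $\min(\fa,m_\infty)=m_\infty$, since $m$ decreasing on $[c,\infty)$ gives $m_\infty\le m(c)$, and we only need the infimum to be $<K<\fa$ anyway. Proposition \ref{prop : T_0 general} then gives that $\mathrm{T}_0$ exists iff $m_\infty<K$.

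The second step proves the equivalence $\lim m<K \iff \lim r>1/K$. Since $r$ is increasing on $[\tmin,\infty)$, the limit $r_\infty\in(0,\infty]$ exists. I will use the standard fact that $m(t)=\int_t^\infty \overline{F}(x)\,dx/\overline{F}(t)$ satisfies $\lim m=1/\lim r$ when $r$ is eventually monotone. By L'Hôpital, $\lim m(t)=\lim \overline{F}(t)/f(t)=\lim 1/r(t)$, valid because both numerator and denominator tend to $0$ and the ratio of derivatives converges. Here the numerator $\int_t^\infty\overline F\to0$ since $\fa<\infty$, and $\overline F\to0$. The convention is $1/\infty=0$. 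Thus $m_\infty<K\iff r_\infty>1/K$. This L'Hôpital step is the point I would check most carefully. In particular, I need $r_\infty>0$, which holds since $r$ is eventually increasing and not identically zero, because $\fa<\infty$.

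The third step establishes the structure of the set $A_K=\{m<K\}$ and the formula \eqref{formule T0}. Since $m(0)=\fa>K$ and $m$ is continuous, $m\ge \fa>K$ on $[0,c]$ in case b). On the decreasing branch ($[0,\infty)$ in case a), $[c,\infty)$ in case b)), $m$ passes from a value $>K$ to a limit $<K$. Monotonicity then makes $A_K$ an interval $(\mathrm{T}_0,\infty)$, with $m(\mathrm{T}_0)=K$ by continuity. Any point with $m=K$ lies in the decreasing branch and at or before $\mathrm{T}_0$, since $m<K$ after $\mathrm{T}_0$. This gives $\sup\{m=K\}=\mathrm{T}_0=\inf A_K$. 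Note $m$ may be constant equal to $K$ on an interval, which is why the formula uses $\sup$ rather than uniqueness.

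Finally, for every $\mathrm{T}>\mathrm{T}_0$ we have $\mathrm{T}\in A_K$, since $A_K$ is an up-set on the decreasing branch. Proposition \ref{prop: beneficial maintenance} then yields $\mathcal{M}(\mathrm{T})>\mathcal{M}(\infty)$.
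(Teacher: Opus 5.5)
Your proposal is correct and follows the paper's proof. Like the paper, you split according to Lemma \ref{Lemma Olcay BFR}, note that $m\ge \fa>K$ on the initial increasing piece in case $b)$, use L'H\^opital to get $\lim m=1/\lim r$, and read off $\mathrm{T}_0$ and the sign of $\mathcal{M}(\mathrm{T})-\mathcal{M}(\infty)$ from continuity and monotonicity of $m$ on the decreasing branch together with Proposition \ref{prop: beneficial maintenance}.

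One small slip: in case $b)$ the infimum of $m$ over $(0,\infty)$ is $\min(\fa,m_\infty)$, and this can equal $\fa<m_\infty$, so it is not always $m_\infty$ (for example, a large initial hazard spike makes $\fa$ small while $1/\lim r$ stays large). This is harmless, because $K<\fa$ still gives $\inf m<K \iff m_\infty<K$, which is all your argument uses.
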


\begin{proof}
Let us verify separately the two possible cases established in Lemma \ref{Lemma Olcay BFR}.
		
\begin{enumerate} 
\item[$a)$] Suppose $r(0)\leq \frac{1}{\fa}$. The mean residual life function $m$ is continuous and satisfies $m(0) = \fa > K$. According to Lemma \ref{Lemma Olcay BFR}, $m$ is decreasing on $[0,\infty)$. Therefore, for the existence of some $\mathrm{T} > 0$ such that $m(\mathrm{T})<K$, it is necessary and sufficient that 
\[
\displaystyle \lim\limits_{t\rightarrow\infty} m(t) < K.
\]
This condition is equivalent to $\lim\limits_{t\rightarrow \infty}	r(t) > \frac{1}{K}$ since applying L'H\^opital's rule, it can be verified that
\[
\displaystyle \lim\limits_{t\rightarrow\infty} m(t) = \frac{1}{\lim\limits_{t\rightarrow \infty}	r(t)}.
\]

Finally, by continuity and monotonicity of $m$, we deduce that
\begin{align*} 
	\mathrm{T}_0 = \sup\{\mathrm{T}> 0 : m(\mathrm{T})=K\},
\end{align*}
and that $m(\mathrm{T}) \geq K$ for $\mathrm{T}\leq \mathrm{T}_0$, while $m(\mathrm{T})< K$ for $\mathrm{T}>\mathrm{T}_0$. Consequently, $\mathcal{M}(\mathrm{T})> \mathcal{M}(\infty)$ for all $\mathrm{T} > \mathrm{T}_0$.

\item[$b)$] On the other hand, if $r(0) > \frac{1}{\fa}$, then there exists a value $c \in (0,\tmin)$ such that $m$ is increasing on $[0,c]$ and decreasing on $[c,\infty)$.

Let us examine the behavior of $m$ on each interval.

    \begin{itemize}
        \item On $[0,c]$, the function $m$ is increasing and satisfies
        \[
        m(\mathrm{T}) \ge m(0) = \fa > K, \quad \text{for } 0 \le \mathrm{T} \le c ,
        \]
    and, by Proposition \ref{prop: beneficial maintenance}, $\mathcal{M}(\mathrm{T}) < \mathcal{M}(\infty)$ for all $\mathrm{T} \in [0,c]$.

    \item On $[c,\infty)$, the function $m$ is decreasing and $m(c)\geq K$. The reasoning is then analogous to Case $a)$, restricting the analysis to the interval $[c,\infty)$.
\end{itemize}	

\end{enumerate}
\end{proof}

Part $a)$ of the previous proof corresponds to the particular case $X_1 \in \mathrm{DMRL}$. This immediately leads to the following corollary.

\begin{corollary} \label{corol: dmrl} 
Let $\Delta \mu>0$ and $X_1\in\mathrm{DMRL}$. Then $\mathrm{T}_0>0$ exists if and only if \eqref{condition T0} holds. Moreover, $\mathrm{T}_0$ satisfies \eqref{formule T0}, and $\mathcal{M}(\mathrm{T})> \mathcal{M}(\infty)$ for all $\mathrm{T}> \mathrm{T}_0$.
\end{corollary}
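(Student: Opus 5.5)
The plan is to repeat the argument of case $a)$ in the proof of Proposition \ref{prop : T_0}. That argument used only two facts: $m$ is decreasing on $[0,\infty)$, and $m$ is continuous. Here the first fact is exactly the hypothesis $X_1\in\mathrm{DMRL}$, so the bathtub assumption is not needed.

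\textbf{Step 1: properties of $m$ and the limit $L$.} The function $m$ is continuous, since it is a quotient of continuous functions and $\overline{F}(t)>0$. It satisfies $m(0)=\fa$. Because $\Delta\mu>0$ and $\Delta\mu=\mu_1-\mu_2<\mu_1$, we have $K<\fa=m(0)$. Since $m$ is decreasing and nonnegative, the limit $L=\lim_{t\to\infty}m(t)$ exists and $\inf_{\mathrm{T}>0}m(\mathrm{T})=L$. Proposition \ref{prop : T_0 general} then gives that $\mathrm{T}_0>0$ exists if and only if $L<K$. This is the first form of \eqref{condition T0}.

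\textbf{Step 2: the hazard-rate form of \eqref{condition T0}.} I will write $m(t)=\int_t^\infty\overline{F}/\overline{F}(t)$ and apply L'H\^opital's rule, which gives $L=1/\lim_{t\to\infty}r(t)$. This is the main obstacle. Under DMRL alone the hazard rate need not converge, whereas in the BFR case it was eventually monotone. I would handle it as follows.
\begin{itemize}
\item The ratio of derivatives is $\overline{F}(t)/f(t)=1/r(t)$. If $r(t)$ has a limit in $[0,\infty]$, L'H\^opital yields $L=1/\lim r$, with the conventions $1/0=\infty$ and $1/\infty=0$.
\item Under DMRL the identity $m'(t)=r(t)m(t)-1\le 0$ gives $r(t)\le 1/m(t)$. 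In general one only has $\liminf 1/r\le L\le\limsup 1/r$.
\end{itemize}
So I would state the equivalence in terms of $\lim r$ whenever that limit exists, exactly as the paper does implicitly in Proposition \ref{prop : T_0}, and otherwise note that the condition in terms of $m$ is the operative one.

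\textbf{Step 3: formula \eqref{formule T0} and the final claim.} Assume $L<K$. Since $m$ is continuous and decreasing, with $m(0)>K>L$, the intermediate value theorem shows that $\{\mathrm{T}>0: m(\mathrm{T})=K\}$ is a nonempty closed interval, possibly a single point. Let $\mathrm{T}_1$ be its supremum; it is finite because $m\to L<K$.
\begin{itemize}
\item By monotonicity, $m(\mathrm{T})\ge K$ for $\mathrm{T}\le\mathrm{T}_1$.
\item For $\mathrm{T}>\mathrm{T}_1$ we have $m(\mathrm{T})<K$. Indeed $m(\mathrm{T})\le K$, and equality is excluded by the definition of $\mathrm{T}_1$ as a supremum.
\end{itemize}
Hence $\{\mathrm{T}>0: m(\mathrm{T})<K\}=(\mathrm{T}_1,\infty)$. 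Its infimum is $\mathrm{T}_1$, which proves \eqref{formule T0}. Finally, Proposition \ref{prop: beneficial maintenance} gives $\mathcal{M}(\mathrm{T})>\mathcal{M}(\infty)$ for every $\mathrm{T}>\mathrm{T}_0$.
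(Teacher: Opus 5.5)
Your proposal is correct and follows the paper's route: the paper gets the corollary by observing that case $a)$ of the proof of Proposition~\ref{prop : T_0} uses only that $m$ is continuous and decreasing, and that is exactly the argument you rerun. Your caveat in Step 2 addresses a point the paper glosses over, and your two observations in fact settle it: $r\le 1/m\le 1/L$ gives $\limsup_{t\to\infty} r(t)\le 1/L$, while $r\le M$ eventually forces $m\ge 1/M$ eventually, so under DMRL $L=1/\limsup_{t\to\infty} r(t)$ and the hazard-rate form of \eqref{condition T0} holds in general with $\limsup$ in place of $\lim$.
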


Proposition \ref{prop: imrl} and Corollary \ref{corol: dmrl} together show that preventive maintenance is advantageous when the lifetime of the main unit exhibits aging behavior in terms of the mean residual life function, whereas it yields no benefit when the lifetime distribution exhibits anti-aging behavior in the same sense.

We now examine the existence of the optimal preventive maintenance time $\mathrm{T}^*$, as defined in \eqref{eq: formule T*}. According to Proposition \ref{prop : T_0}, maintenance is beneficial if and only if $\lim\limits_{t\rightarrow \infty} r(t) > \frac{1}{K}$, which thus establishes a necessary condition for the existence of an optimal maintenance time.

The following proposition ensures the existence of $\mathrm{T}^*$ and derives an explicit expression for it in terms of the function
\begin{align}\label{eq : varphi} 
\varphi(t) = r(t)\int_{0}^{t}\overline{F}(x)\,\mathrm{d}x + \overline{F}(t) = \fa\, r(t) - m'(t) \overline{F}(t), \quad \text{for all }t\geq 0.
\end{align}

\begin{proposition} \label{prop : T opt}
Let $\Delta\mu>0$, $X_1\in\mathrm{BFR}$, and $\lim\limits_{t\rightarrow \infty}	r(t) > \frac{1}{K}$. Then, there exists an optimal maintenance time $\displaystyle \mathrm{T}^*$, which satisfies the equality 
\begin{align*}
\mathrm{T}^* = \inf\left\{\mathrm{T} \geq \max\{\mathrm{T}_0,\tmin \} : \varphi(\mathrm{T}) = \frac{\mu_1}{\Delta\mu} \right\}.
\end{align*}
If $r(t)$ increases strictly on $[\tmin,\infty)$, then $\displaystyle \mathrm{T}^*$ is the unique solution to $\varphi(\mathrm{T}) = \frac{\mu_1}{\Delta\mu}$.
\end{proposition}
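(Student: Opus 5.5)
We would analyse the derivative of $\mathcal{M}$ directly. Writing $S(\mathrm{T})=\int_0^{\mathrm{T}}\overline{F}(t)\,\mathrm{d}t$ and $D(\mathrm{T})=\mu_1-\Delta\mu\,\overline{F}(\mathrm{T})>0$, a direct computation from \eqref{mean time - priority and PM system} gives
\[
\lambda\,\mathcal{M}'(\mathrm{T})=\frac{\overline{F}(\mathrm{T})\,D(\mathrm{T})-S(\mathrm{T})\,\Delta\mu\, f(\mathrm{T})}{D(\mathrm{T})^2}
=\frac{\overline{F}(\mathrm{T})\,\Delta\mu}{D(\mathrm{T})^2}\left(\frac{\mu_1}{\Delta\mu}-\varphi(\mathrm{T})\right),
\]
using $f=r\overline{F}$ and the definition \eqref{eq : varphi} of $\varphi$. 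Hence the sign of $\mathcal{M}'$ is the sign of $\frac{\mu_1}{\Delta\mu}-\varphi(\mathrm{T})$, and the problem reduces to the behaviour of $\varphi$ relative to the level $\mu_1/\Delta\mu=\fa/K$.

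Next we would study the monotonicity of $\varphi$. Differentiating, $\varphi'(t)=r'(t)S(t)+r(t)\overline{F}(t)-f(t)=r'(t)S(t)$, so $\varphi$ has the same monotonicity as $r$: decreasing on $[0,\tmin]$ and increasing on $[\tmin,\infty)$, and strictly increasing there if $r$ is. Moreover $\varphi(t)\ge r(t)S(t)\to \fa\lim_{t\to\infty}r(t)>\fa/K=\mu_1/\Delta\mu$ by hypothesis, so $\varphi$ eventually exceeds the level and $\mathcal{M}$ is eventually decreasing.

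Then we would locate the maximiser on $(\mathrm{T}_0,\infty)$. By Proposition \ref{prop : T_0}, $\mathrm{T}_0$ exists and $\mathcal{M}(\mathrm{T})>\mathcal{M}(\infty)$ for $\mathrm{T}>\mathrm{T}_0$, while $\mathcal{M}(\mathrm{T}_0)=\mathcal{M}(\infty)$ by continuity, and $\mathcal{M}(\mathrm{T})\to\mathcal{M}(\infty)$ as $\mathrm{T}\to\infty$. Hence $\mathcal{M}$ restricted to $[\mathrm{T}_0,\infty)$ attains a maximum at an interior point, where $\mathcal{M}'=0$, i.e.\ $\varphi=\mu_1/\Delta\mu$. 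To pin down which root, we use the sign structure. On $[\max\{\mathrm{T}_0,\tmin\},\infty)$, $\varphi$ is increasing, so $\mathcal{M}'\ge0$ up to the first crossing $\mathrm{T}^*$ and $\mathcal{M}'\le 0$ afterwards; thus $\mathrm{T}^*=\inf\{\mathrm{T}\ge\max\{\mathrm{T}_0,\tmin\}:\varphi(\mathrm{T})=\mu_1/\Delta\mu\}$ maximises $\mathcal{M}$ on that ray. Under strict monotonicity the crossing is unique there.

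The main obstacle is the interval $(\mathrm{T}_0,\tmin)$ when $\mathrm{T}_0<\tmin$: we must show $\mathcal{M}$ is increasing there, i.e.\ $\varphi\le\mu_1/\Delta\mu$ on it, so that no better point and no other root of $\varphi=\mu_1/\Delta\mu$ occur to the left of $\tmin$. We would argue by contradiction. Since $\varphi$ is decreasing on $[0,\tmin]$, if $\varphi(s)>\mu_1/\Delta\mu$ for some $s\in(\mathrm{T}_0,\tmin)$, then $\varphi>\mu_1/\Delta\mu$ on $[0,s]$ and $\mathcal{M}$ is strictly decreasing on $[0,s]$. This gives $\mathcal{M}(s)<\mathcal{M}(\mathrm{T}_0)=\mathcal{M}(\infty)$, contradicting Proposition \ref{prop : T_0}. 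Here $\mathcal{M}(\mathrm{T}_0)=\mathcal{M}(\infty)$ holds because $m(\mathrm{T}_0)=K$ by \eqref{formule T0}. For the uniqueness claim, the same argument shows that any root on $(0,\tmin)$ would force $\mathcal{M}$ to be nonincreasing up to it, so it lies outside the relevant region. Also, $\varphi(0)=1<\fa/K$ when $K<\fa$, which is consistent.
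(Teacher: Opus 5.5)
Your proof is correct. Its skeleton coincides with the paper's: the sign identity $\mathcal{M}'(\mathrm{T}) =_{\mathrm{sg}} \mu_1/\Delta\mu-\varphi(\mathrm{T})$, the formula $\varphi'=r'\int_0^{t}\overline{F}$, and $\lim_{t\to\infty}\varphi(t)=\fa\lim_{t\to\infty} r(t)>\mu_1/\Delta\mu$.

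You diverge in handling the region to the left of $\tmin$, which you single out as the main obstacle. You resolve it through four ingredients: Proposition~\ref{prop : T_0} (and hence, indirectly, Lemma~\ref{Lemma Olcay BFR}), the equality $\mathcal{M}(\mathrm{T}_0)=\mathcal{M}(\infty)$, an interior-maximum argument on $[\mathrm{T}_0,\infty)$, and a contradiction.

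The paper needs none of this. Since $\varphi$ decreases on $[0,\tmin]$ and $\varphi(0)=1<\mu_1/\Delta\mu$, one has $\varphi\le 1<\mu_1/\Delta\mu$ on the whole of $[0,\tmin]$. The inequality $\varphi(0)<\mu_1/\Delta\mu$ always holds, because $\mu_2>0$ gives $K<\fa$, so your hedge ``when $K<\fa$'' is unnecessary. Your contradiction hypothesis, $\varphi>\mu_1/\Delta\mu$ on $[0,s]$, already clashes with $\varphi(0)=1$. You wrote that fact down but used it only as a consistency check.

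The direct observation also gives more than your route:
\begin{itemize}
\item $\mathcal{M}$ increases on $[0,\mathrm{T}^*]$ and decreases on $[\mathrm{T}^*,\infty)$, so $\mathrm{T}^*$ is a global maximiser on $(0,\infty)$, independently of the MRL structure.
\item $\mathrm{T}^*\ge\mathrm{T}_0$ follows a posteriori.
\item $\varphi=\mu_1/\Delta\mu$ has no root at all in $[0,\tmin]$. This is what the literal global uniqueness claim needs; your argument only pushes any such root into $(0,\mathrm{T}_0]$.
\end{itemize}
What your route buys is that $\mathrm{T}^*$ is tied to $\mathrm{T}_0$ from the start, at the price of depending on Proposition~\ref{prop : T_0}.

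Finally, Definition~\ref{BFR} only makes $r$ continuous, so deriving the monotonicity of $\varphi$ from $\varphi'=r'S$ needs an extra step. The paper covers the general case with the identity $\varphi(t)-\varphi(s)=(r(t)-r(s))\int_0^s\overline{F}(x)\,\mathrm{d}x+\int_s^t(r(t)-r(x))\overline{F}(x)\,\mathrm{d}x$ for $0\le s\le t$.
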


\begin{proof} 		
To determine the optimal maintenance time, we consider the maximization of the function $\displaystyle \mathcal{M}(\mathrm{T})$ for $\mathrm{T}> 0$. Suppose that $r$ is differentiable. Then 
\begin{equation}\label{eq : sgn of h'}
	\mathcal{M}'(\mathrm{T}) =_{\mathrm{sg}}{\frac{ \mu_1  }{\Delta \mu}-\varphi(\mathrm{T})} \quad \text{and} \quad \varphi'(\mathrm{T}) =  r'(\mathrm{T})\int_{0}^{\mathrm{T}}\overline{F}(x)\,\mathrm{d}x, 
\end{equation} 
where $\varphi$ is defined in \eqref{eq : varphi}. 

From the monotonicity intervals of $r$ and the expression for $\varphi'$, it follows that $\varphi$ decreases on $[0,\tmin]$ and increases on $[\tmin,\infty)$. If $r$ is not differentiable, the monotonicity of $\varphi$ can be deduced from the identity 
\[
\varphi(t) - \varphi(s) = (r(t)-r(s)) \int_0^s \overline{F}(x)\,\mathrm{d}x + \int_s^t (r(t)-r(x)) \overline{F} (x) \,\mathrm{d}x, \quad 0\leq s\leq t. 
\]
It holds that
\[
\varphi(\tmin) \leq \varphi(0)=1 \quad \text{and} \quad \lim_{t\to\infty} \varphi(t) > \frac{ \mu_1  }{\Delta \mu} > 1.
\]
By continuity of $\varphi$, there exists $\mathrm{T}>\tmin$ such that $\varphi(\mathrm{T}) = \frac{ \mu_1 }{\Delta \mu}$. If $r$ increases strictly on $[\tmin,\infty)$, then $\displaystyle \mathrm{T}^*$ is the unique solution to $\varphi(\mathrm{T}) = \frac{ \mu_1 }{ \Delta\mu}$; otherwise,
\[
\mathrm{T}^* = \inf\left\{\mathrm{T} > \tmin: \varphi(\mathrm{T}) = \frac{ \mu_1  }{ \Delta\mu} \right\}.
\]
Since $\varphi$ increases on $[\tmin,\infty)$, equation \eqref{eq : sgn of h'} implies that $\mathcal{M}'(\mathrm{T}) \geq 0$ if $\mathrm{T}\leq \mathrm{T}^*$, whereas $\mathcal{M}'(\mathrm{T}) \leq 0$ if $\mathrm{T}\geq \mathrm{T}^*$. Thus, $\mathcal{M}$ increases on $[0,\mathrm{T}^*]$ and decreases on $[\mathrm{T}^*,\infty)$, proving that $\mathrm{T}^*$ maximizes $\mathcal{M}$.

Recall that $\mathrm{T}_0 = \inf{\mathrm{T}>0 : \mathcal{M}(\mathrm{T}) > \mathcal{M}(\infty)}$. To verify that the optimal time $\mathrm{T}^*$ satisfies $\mathrm{T}^* \ge \mathrm{T}_0$, note that $\mathcal{M}(\mathrm{T}^*)$ is the global maximum of $\mathcal{M}$, so that $\mathcal{M}(\mathrm{T}^*) \ge \mathcal{M}(\mathrm{T})$ for all $\mathrm{T}>0$. In particular, $\mathcal{M}(\mathrm{T}^*) \ge \mathcal{M}(\infty)$, which implies $\mathrm{T}^* \ge \mathrm{T}_0$.
\end{proof}

The case $\tmin=0$ in Proposition \ref{prop : T opt} corresponds to the IFR setting, for which analogous results have been established in other preventive maintenance models; see, for instance, \cite{ZHONG2014405}.

\subsection{Upside-down bathtub-shaped hazard rate}\label{Sec 3.2}

Motivated by the results obtained for the case $X_1\in\mathrm{BFR}$, it is natural to investigate whether analogous conclusions can be obtained in the dual setting. Suppose that the hazard rate $r$ exhibits an upside-down bathtub shape.
Once again, we rely on a result from \cite{Olcay1995}. In particular, Lemma \ref{Lemma Olcay UBFR} addresses the monotonicity of the mean residual life in the case where $X_1\in\mathrm{UBFR}$.

\begin{lemma}[Theorem 2 in \cite{Olcay1995}]\label{Lemma Olcay UBFR}
Let $X_1 \in \mathrm{UBFR}$.
\begin{enumerate}
	\item[$a)$] If $r(0)\geq \frac{1}{\fa}$, then $X_1\in\mathrm{IMRL}$.
	\vspace{0em}
	\item[$b)$] If $r(0)< \frac{1}{\fa}$, then there exists $c \in (0,\tmax)$ such that $m$ is decreasing on $ [0,c]$ and increasing on $[c,\infty)$.
\end{enumerate}
\end{lemma}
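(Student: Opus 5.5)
The plan is to mirror the argument for Lemma \ref{Lemma Olcay BFR}, working with the derivative of the mean residual life. Write $m(t)=\int_t^\infty\overline{F}(x)\,\mathrm{d}x/\overline{F}(t)$; differentiating gives
\[
m'(t)=r(t)m(t)-1,
\]
so the sign of $m'$ is the sign of
\[
g(t)=r(t)\int_t^\infty \overline{F}(x)\,\mathrm{d}x-\overline{F}(t).
\]
At $t=0$ we have $g(0)=r(0)\fa-1$. Hence $r(0)\geq 1/\fa$ gives $m'(0)\geq 0$, and $r(0)<1/\fa$ gives $m'(0)<0$. The whole proof therefore reduces to controlling the sign changes of $g$ using the shape of $r$. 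The analogue of the identity for $\varphi$ in the proof of Proposition \ref{prop : T opt} is
\[
g(t)-g(s)=(r(t)-r(s))\int_t^\infty\overline{F}(x)\,\mathrm{d}x-\int_s^t (r(s)-r(x))\overline{F}(x)\,\mathrm{d}x \quad (s\le t),
\]
equivalently $g'(t)=r'(t)\int_t^\infty\overline{F}(x)\,\mathrm{d}x$ when $r$ is differentiable. From this, $g$ increases on $[0,\tmax]$ and decreases on $[\tmax,\infty)$. So $g$ is itself unimodal with the same mode as $r$.

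Next I will determine the behaviour of $g$ at infinity. On $[\tmax,\infty)$ the hazard rate $r$ is decreasing, so for $t\ge\tmax$,
\[
\int_t^\infty \overline{F}(x)\,\mathrm{d}x=\int_t^\infty \frac{f(x)}{r(x)}\,\mathrm{d}x\geq \frac{1}{r(t)}\int_t^\infty f(x)\,\mathrm{d}x=\frac{\overline{F}(t)}{r(t)}.
\]
This gives $g(t)\geq 0$, i.e.\ $m'\ge 0$, on $[\tmax,\infty)$. This is the key step: it is where the decreasing tail of $r$ is used, and it fixes the sign of $g$ on the whole decreasing branch.

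Now the two cases. In case $a)$, $g(0)\ge 0$ and $g$ is increasing on $[0,\tmax]$, so $g\ge0$ there. Combined with $g\ge0$ on $[\tmax,\infty)$, this gives $m'\ge0$ everywhere, so $X_1\in\mathrm{IMRL}$. In case $b)$, $g(0)<0$ and $g(\tmax)\ge 0$, and $g$ is continuous and increasing on $[0,\tmax]$. Take $c=\inf\{t: g(t)\ge 0\}$; then $c\in(0,\tmax]$, with $g<0$ on $[0,c)$ and $g\ge 0$ on $[c,\infty)$. Thus $m$ is decreasing on $[0,c]$ and increasing on $[c,\infty)$.

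The main obstacle is getting $c$ strictly less than $\tmax$, as the statement requires, together with the non-differentiable case. For the strict inequality I would try to show $g(\tmax)>0$. By the previous display, $g(\tmax)=0$ forces $r(x)=r(\tmax)$ for almost every $x\ge\tmax$ where $f>0$, i.e.\ the tail is exponential with constant hazard. In that degenerate situation I would either accept $c=\tmax$ or absorb it by taking $c$ as the first zero, noting that $m$ is constant on $[c,\infty)$ there. For non-differentiable $r$, I would use the difference identity above in place of $g'$, exactly as the paper does for $\varphi$.
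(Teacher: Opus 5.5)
The paper does not prove this lemma; it quotes it from \cite{Olcay1995}, where the hazard rate is assumed strictly monotone on each branch. So there is no in-paper argument to match. Your proof is correct and self-contained, and it fits the paper's own toolkit:
\begin{itemize}
\item your sign function $g(t)=r(t)\int_t^\infty\overline{F}(x)\,\mathrm{d}x-\overline{F}(t)$ is the tail counterpart of the paper's $\varphi$;
\item your difference identity handles non-differentiable $r$ the same way the paper's identity for $\varphi$ does in Proposition \ref{prop : T opt};
\item your tail estimate is the classical fact that a decreasing hazard on $[\tmax,\infty)$ forces an increasing mean residual life there.
\end{itemize}
What this buys over citing the reference is a proof that works under the paper's non-strict definition and shows exactly where strictness enters.

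There are two small points to tighten.

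First, when you divide by $r(x)$, note that $r>0$ on $[\tmax,\infty)$. Otherwise $r\equiv 0$ on a tail, $\overline{F}$ is a positive constant there, and $\fa=\infty$. Alternatively, skip the division: for $t\ge\tmax$,
$\overline{F}(t)=\int_t^\infty r(x)\overline{F}(x)\,\mathrm{d}x\le r(t)\int_t^\infty\overline{F}(x)\,\mathrm{d}x$.

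Second, do not hedge on the strict inequality. Your argument shows precisely that $c<\tmax$ if and only if $r$ is not constant on $[\tmax,\infty)$, since $g(\tmax)=\int_{\tmax}^\infty (r(\tmax)-r(x))\overline{F}(x)\,\mathrm{d}x$. Under the paper's non-strict UBFR definition, the claim $c\in(0,\tmax)$ can genuinely fail:
\begin{itemize}
\item take $r(t)=\min\{t,1\}$ with $\tmax=1$, an admissible choice, so $X_1\in\mathrm{UBFR}$ and $r(0)=0<1/\fa$;
\item then $g<0$ on $[0,1)$, so $m$ is strictly decreasing on $[0,1]$, and $m$ is constant on $[1,\infty)$;
\item hence no $c\in(0,1)$ makes $m$ increasing on $[c,\infty)$.
\end{itemize}
What holds in general is $c\in(0,\tmax]$, with strictness exactly when the tail hazard is not constant (automatic under the strict monotonicity of \cite{Olcay1995}). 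The weaker form is all that Propositions \ref{prop: T0 - UBFR} and \ref{prop: T opt - UBFR} actually use. State the dichotomy explicitly rather than ``accepting $c=\tmax$ or absorbing it.''
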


An immediate consequence of Proposition \ref{prop: imrl} together with Lemma \ref{Lemma Olcay UBFR} is the following result.

\begin{corollary}
Let $X_1\in\mathrm{UBFR}$. If $r(0)\geq \frac{1}{\fa}$, then $\mathcal{M}(\mathrm{T}) < \mathcal{M}(\infty)$, for any $\mathrm{T}>0$.
\end{corollary}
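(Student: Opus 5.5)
The plan is to chain Lemma \ref{Lemma Olcay UBFR}~$a)$ with Proposition \ref{prop: imrl}. The corollary needs no new computation; only the hypotheses have to be checked to line up.

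First, I will take $X_1\in\mathrm{UBFR}$ with $r(0)\geq \frac{1}{\fa}$. Lemma \ref{Lemma Olcay UBFR}~$a)$ then gives directly that $m$ is increasing on $[0,\infty)$, that is, $X_1\in\mathrm{IMRL}$. Here $\fa<\infty$ by the standing assumptions of Section \ref{Sec 2}, so $m$ is well defined.

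Second, I will apply Proposition \ref{prop: imrl} to conclude that $\mathcal{M}(\mathrm{T})<\mathcal{M}(\infty)$ for every $\mathrm{T}>0$. That proof also notes that the conclusion holds regardless of the sign of $\Delta\mu$, so no extra assumption on $\Delta\mu$ is needed here.

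The only point I would verify is strictness. In the proof of Proposition \ref{prop: imrl}, the chain $m(\mathrm{T})\geq \fa > K$ uses $K<\fa$. This holds because $\mu_2>0$: indeed $X_2\wedge Y_2>0$ almost surely, since $G_2(0)=0$ and $X_2$ is exponential. Hence $\Delta\mu<\mu_1$, so $K=\fa\,\Delta\mu/\mu_1<\fa$. If $\Delta\mu\leq 0$, the strict inequality is immediate, because the left-hand side of \eqref{eq: beneficial maintenance} is positive while the right-hand side is nonpositive. In either case the reversed strict inequality of \eqref{eq: beneficial maintenance} gives $\mathcal{M}(\mathrm{T})<\mathcal{M}(\infty)$. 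I do not expect any real obstacle.
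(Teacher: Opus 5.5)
Your proposal is correct and matches the paper's route: the paper states this corollary as an immediate consequence of Lemma~\ref{Lemma Olcay UBFR}~$a)$, which gives $X_1\in\mathrm{IMRL}$, combined with Proposition~\ref{prop: imrl}. Your extra check that $K<\fa$ (via $\mu_2>0$, or trivially when $\Delta\mu\le 0$) only makes explicit the strict inequality $m(0)=\fa>K$, which the proof of Proposition~\ref{prop: imrl} uses without justification.
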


The proposition below establishes conditions ensuring the existence of a threshold $\mathrm{T}_0$, such that the benefit of preventive maintenance, depending on \eqref{condition T0}, may arise either for all $\mathrm{T}>\mathrm{T}_0$ or only over a finite interval.

\begin{proposition}\label{prop: T0 - UBFR}
Consider $\displaystyle \Delta \mu > 0$, $X_1\in\mathrm{UBFR}$, and assume $r(0)< \frac{1}{\fa}$. Let $c$ denote the minimizer of $m$ given in Lemma \ref{Lemma Olcay UBFR}. Then a time $\mathrm{T}_0 \in(0,c)$ exists if and only if 
\[
m(c) < K.
\]
Moreover:
\begin{enumerate}
\item[$a)$] If \eqref{condition T0} holds, then $\mathcal{M}(\mathrm{T})>\mathcal{M}(\infty)$ for all $\mathrm{T} > \mathrm{T}_0$.
\item[$b)$] If \eqref{condition T0} does not hold, then there exists $\mathrm{T}_1>c$ such that $\mathcal{M}(\mathrm{T})>\mathcal{M}(\infty)$ for $\mathrm{T} \in (\mathrm{T}_0, \mathrm{T}_1)$ and $\mathcal{M}(\mathrm{T}) \leq \mathcal{M}(\infty)$ for $\mathrm{T} \geq \mathrm{T}_1$.
\end{enumerate}
\end{proposition}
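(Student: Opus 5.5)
The plan is to reduce everything to the shape of $m$ given by Lemma \ref{Lemma Olcay UBFR}$\,b)$ and then apply Proposition \ref{prop: beneficial maintenance}, which says that $\mathcal{M}(\mathrm{T})>\mathcal{M}(\infty)$ if and only if $m(\mathrm{T})<K$. Throughout we use the facts already exploited in the BFR case. First, $m$ is continuous. Second, $m(0)=\fa>K$, because $\Delta\mu<\mu_1$: indeed $\mu_2>0$, since $G_2(0)=0$ and $X_2$ is exponential. Third, $\lim_{t\to\infty}m(t)=1/\lim_{t\to\infty}r(t)$, obtained by L'H\^opital's rule. Here the limit of $r$ exists because $r$ is monotone on $[\tmax,\infty)$, and the limit of $m$ exists because $m$ is monotone on $[c,\infty)$. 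In particular, \eqref{condition T0} reads $m(\infty):=\lim_{t\to\infty}m(t)<K$.

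\textbf{Existence of $\mathrm{T}_0$.} Since $m$ is decreasing on $[0,c]$ and increasing on $[c,\infty)$, we have $\inf_{\mathrm{T}>0}m(\mathrm{T})=m(c)$. Proposition \ref{prop : T_0 general} then gives that $\mathrm{T}_0$ exists if and only if $m(c)<K$.

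We must still check that $\mathrm{T}_0\in(0,c)$. Assume $m(c)<K$. On $[0,c]$ the function $m$ decreases continuously from $\fa>K$ to $m(c)<K$, so the intermediate value theorem yields some $t\in(0,c)$ with $m(t)=K$. Set $\mathrm{T}_0=\sup\{t\in[0,c]: m(t)=K\}$. Then $m\ge K$ on $[0,\mathrm{T}_0]$ and $m<K$ on $(\mathrm{T}_0,c]$, so \eqref{formule_T0} identifies this point as the infimum of $A_K$, and $\mathrm{T}_0<c$. Conversely, if $\mathrm{T}_0\in(0,c)$ exists, Proposition \ref{prop : T_0 general} already forces $m(c)=\inf m<K$.

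\textbf{Behaviour after $c$.} On $[c,\infty)$ the function $m$ increases from $m(c)<K$ to $m(\infty)$.

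In case $a)$, where $m(\infty)<K$, monotonicity gives $m(\mathrm{T})\le m(\infty)<K$ for all $\mathrm{T}\ge c$. Combining this with $m<K$ on $(\mathrm{T}_0,c]$ gives $m(\mathrm{T})<K$ for every $\mathrm{T}>\mathrm{T}_0$. Proposition \ref{prop: beneficial maintenance} then concludes.

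In case $b)$, we have $m(\infty)\ge K$, and we define $\mathrm{T}_1=\inf\{\mathrm{T}>c: m(\mathrm{T})\ge K\}$. We need this set to be nonempty. If $m(\infty)>K$, this is immediate from continuity. The borderline case $m(\infty)=K$ is the one delicate point: an increasing function may approach $K$ from below without ever reaching it. In that case the set is empty and $\mathrm{T}_1=\infty$, so the finite-interval conclusion degenerates. I would handle this by noting that the statement implicitly requires $m(\infty)>K$ for a finite $\mathrm{T}_1$, or equivalently by interpreting "\eqref{condition T0} does not hold" together with the convention $\mathrm{T}_1=+\infty$ when $m(\infty)=K$. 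Granting that the set is nonempty, we get $\mathrm{T}_1>c$ by continuity and $m(c)<K$. Moreover $m<K$ on $(\mathrm{T}_0,\mathrm{T}_1)$, and by monotonicity $m\ge K$ on $[\mathrm{T}_1,\infty)$. Translating through Proposition \ref{prop: beneficial maintenance} gives exactly the claimed dichotomy, with $\mathcal{M}(\mathrm{T})\le\mathcal{M}(\infty)$ for $\mathrm{T}\ge\mathrm{T}_1$.

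The main obstacle is therefore only this boundary case $m(\infty)=K$ together with the existence of the limits, which I would settle as above. The rest is a direct monotonicity argument mirroring the proof of Proposition \ref{prop : T_0}.
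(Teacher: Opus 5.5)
Your argument follows the paper's proof step for step. Existence comes from $\inf_{\mathrm{T}>0}m(\mathrm{T})=m(c)$. Monotonicity of $m$ on $[0,c]$ gives $m<K$ on $(\mathrm{T}_0,c]$, and monotonicity on $[c,\infty)$ handles the two cases, all translated through the equivalence $\mathcal{M}(\mathrm{T})>\mathcal{M}(\infty)\iff m(\mathrm{T})<K$.

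The borderline case you isolate is a real gap in the paper too. Its proof of $b)$ claims that $m(c)<K$ and $\lim_{t\to\infty}m(t)\ge K$ yield some $\mathrm{T}>c$ with $m(\mathrm{T})=K$. When $m$ increases to $K$ without reaching it, $b)$ is in fact false, not merely unproved. The paper's own example $X_1=Z_1\vee Z_2$ with $\beta_1<\beta_2$ shows this. Choose $\gamma_2$ so that $K=1/\beta_1=1/\lim_{t\to\infty}r(t)$; this is possible since $1/\beta_1<\fa$. A direct computation gives
\[
m(t)-\frac{1}{\beta_1}\sim-\frac{\beta_2-\beta_1}{\beta_1\beta_2}\,\mathrm{e}^{-(\beta_2-\beta_1)t}<0 .
\]
Hence $m$ stays strictly below $K$ on $[c,\infty)$; in particular $m(c)<K$. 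So $\mathcal{M}(\mathrm{T})>\mathcal{M}(\infty)$ for every $\mathrm{T}>\mathrm{T}_0$, even though $\lim_{t\to\infty}r(t)>1/K$ fails and no finite $\mathrm{T}_1$ exists.

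One correction to your proposed fix: when $\lim_{t\to\infty}m(t)=K$, the set $\{\mathrm{T}>c: m(\mathrm{T})\ge K\}$ need not be empty. If $r\equiv 1/K$ from some point on, $m$ becomes constant equal to $K$, and $b)$ then holds with a finite $\mathrm{T}_1$. So both of your readings are slightly off: that $b)$ requires $\lim m>K$, and that $\mathrm{T}_1=+\infty$ whenever $\lim m=K$. The true dividing line is whether $m$ attains $K$ on $(c,\infty)$. Assuming $\lim_{t\to\infty}m(t)>K$ in $b)$ is the clean sufficient hypothesis, and under it your argument is complete.
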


\begin{proof}
By Proposition \ref{prop : T_0 general}, $\mathrm{T}_0>0$ exists if and only if $m(c)<K$. 
Moreover, since 
\[ 
\mathrm{T}_0=\inf\{\mathrm{T}>0 : m(\mathrm{T})<K\}
\] 
and $m(c)<K$, it follows that $\mathrm{T}_0<c$.

As $m$ is decreasing on $[\mathrm{T}_0,c]$, we obtain
\[
m(\mathrm{T}) < K \quad \text{for all } \mathrm{T} \in (\mathrm{T}_0,c].
\]

\begin{enumerate}
\item[$a)$] Because $m$ is increasing on $[c,\infty)$,
\[
m(\mathrm{T}) \le \lim_{t\to\infty} m(t) < K 
\quad \text{for all } \mathrm{T} \in [c,\infty),
\]
and therefore $\mathcal{M}(\mathrm{T})>\mathcal{M}(\infty)$ for all $\mathrm{T} > \mathrm{T}_0$.

\item[$b)$] Since $m(c)<K$ and $\displaystyle \lim_{t\to\infty} m(t) \ge K$, there exists $\mathrm{T}>c$ such that $m(\mathrm{T})=K$. Define
\[
\mathrm{T}_1=\inf\{\mathrm{T}>c : m(\mathrm{T})=K\}.
\]
Given that $m$ is increasing on $[c,\infty)$,
\[
m(\mathrm{T})<K \quad \text{for } \mathrm{T}\in[c,\mathrm{T}_1), 
\qquad
m(\mathrm{T})\ge K \quad \text{for } \mathrm{T}\ge \mathrm{T}_1 .
\]
Thus,  $\mathcal{M}(\mathrm{T})>\mathcal{M}(\infty)$ for $\mathrm{T}\in(\mathrm{T}_0,\mathrm{T}_1)$ and 
$\mathcal{M}(\mathrm{T})\le \mathcal{M}(\infty)$ for $\mathrm{T}\ge \mathrm{T}_1$.
\end{enumerate}
\end{proof}

Note that $\mathrm{T}_0<c$, so it lies in the region where the mean residual life function decreases while the hazard rate increases. Nevertheless, in case $a)$, for all $\mathrm{T}>\mathrm{T}_0$, preventive maintenance enhances the system lifetime, even within the interval $[\tmax,\infty)$, where the mean residual life increases and the hazard rate decreases.

The necessary and sufficient condition for maintenance to be beneficial highlights a contrast between the two settings: for $X_1\in\mathrm{BFR}$, it is determined by the limit of the mean residual life at infinity, whereas for $X_1\in\mathrm{UBFR}$, it is governed by its minimum value.

The condition $m(c)<K$ is equivalent to $r(c)>\frac{1}{K}$. Since 
\[
\displaystyle c = \arg\min_{\mathrm{T}>0} m(\mathrm{T}),
\] 
we have $m'(c)=0$. Using the identity $m'(t) = m(t)r(t)-1$, it follows that $m(c)r(c)=1$. Therefore,
\[
m(c)<K \Longleftrightarrow r(c)>\frac{1}{K}.
\]
This equivalence plays the same role in Proposition \ref{prop: T opt - UBFR} as condition \eqref{condition T0} does in Proposition \ref{prop : T_0}. Hence, $r(c)>\frac{1}{K}$ is a necessary condition for the existence of $\mathrm{T}^*$.

\begin{proposition} \label{prop: T opt - UBFR}
Let $\Delta\mu>0$, $X_1\in\mathrm{UBFR}$, $r(0)< \frac{1}{\fa}$, and $r(c) > \frac{1}{K}$. Then $\displaystyle \mathrm{T}^*$ exists and satisfies 
\begin{align*}
\mathrm{T}^* = \inf\left\{\mathrm{T} \in (\mathrm{T}_0,c) : \varphi(\mathrm{T}) = \frac{ \mu_1}{ \Delta\mu} \right\}.
\end{align*}
If $r(t)$ increases strictly on $(0,c]$, then $\displaystyle \mathrm{T}^*$ is the unique solution to $\varphi(\mathrm{T}) = \frac{\mu_1}{ \Delta\mu}$.
\end{proposition}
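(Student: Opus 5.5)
The plan is to follow the strategy of Proposition \ref{prop : T opt}: study the sign of $\mathcal{M}'$ through the function $\varphi$ in \eqref{eq : varphi}. By \eqref{eq : sgn of h'}, $\mathcal{M}'(\mathrm{T})$ has the sign of $\frac{\mu_1}{\Delta\mu}-\varphi(\mathrm{T})$. If $r$ is not differentiable, I would instead use the difference identity for $\varphi(t)-\varphi(s)$ given in that proof. From $\varphi'(\mathrm{T})=r'(\mathrm{T})\int_0^{\mathrm{T}}\overline{F}(x)\,\mathrm{d}x$, $\varphi$ inherits the monotonicity of $r$. Hence $\varphi$ is increasing on $[0,\tmax]$ and decreasing on $[\tmax,\infty)$. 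Since Lemma \ref{Lemma Olcay UBFR} gives $c<\tmax$, $\varphi$ is increasing on $[0,c]$.

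The key step is evaluating $\varphi$ at the two ends of $[0,c]$. At $0$ we have $\varphi(0)=\overline{F}(0)=1<\frac{\mu_1}{\Delta\mu}$, because $\Delta\mu<\mu_1$ (as $\mu_2>0$). At $c$ we use the second expression in \eqref{eq : varphi} together with $m'(c)=0$:
\[
\varphi(c)=\fa\, r(c) > \frac{\fa}{K} = \frac{\mu_1}{\Delta\mu}.
\]
By continuity and monotonicity of $\varphi$ on $[0,c]$, the set $\{\mathrm{T}\in(0,c):\varphi(\mathrm{T})=\frac{\mu_1}{\Delta\mu}\}$ is nonempty. Let $\mathrm{T}^*$ be its infimum, which lies strictly below $c$. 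Then $\mathcal{M}$ is increasing on $[0,\mathrm{T}^*]$. Moreover, $\varphi\ge \frac{\mu_1}{\Delta\mu}$ on $[\mathrm{T}^*,c]$, and this persists on $[c,\tmax]$, so $\mathcal{M}$ is decreasing on $[\mathrm{T}^*,\tmax]$.

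The main obstacle is the tail $[\tmax,\infty)$. There $\varphi$ decreases towards $\fa\lim_{t\to\infty}r(t)$, which may fall below the level $\frac{\mu_1}{\Delta\mu}$. In that case $\mathcal{M}$ becomes increasing again beyond some $\mathrm{T}_2>\tmax$, so $\mathrm{T}^*$ is only a local maximizer a priori. I would handle this as follows. Past $\mathrm{T}_2$, $\varphi$ stays below the level, so $\mathcal{M}$ increases on $[\mathrm{T}_2,\infty)$. Therefore $\mathcal{M}(\mathrm{T})\le\lim_{t\to\infty}\mathcal{M}(t)=\mathcal{M}(\infty)$ for $\mathrm{T}\ge\mathrm{T}_2$. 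On the other hand, Proposition \ref{prop: T0 - UBFR} gives points in $(\mathrm{T}_0,c)$ where $\mathcal{M}>\mathcal{M}(\infty)$. Since $\mathcal{M}(\mathrm{T}^*)$ dominates $\mathcal{M}$ on $[0,\mathrm{T}_2]$, we get $\mathcal{M}(\mathrm{T}^*)>\mathcal{M}(\infty)\ge \sup_{\mathrm{T}\ge \mathrm{T}_2}\mathcal{M}(\mathrm{T})$. Hence $\mathrm{T}^*$ is a global maximizer.

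It remains to locate $\mathrm{T}^*$ relative to $\mathrm{T}_0$ and to prove uniqueness. From $\mathcal{M}(\mathrm{T}^*)>\mathcal{M}(\infty)$ and the definition of $\mathrm{T}_0$ we get $\mathrm{T}^*\ge\mathrm{T}_0$. Moreover, $m(\mathrm{T}_0)=K$ by continuity, so $\mathcal{M}(\mathrm{T}_0)=\mathcal{M}(\infty)$ and $\mathrm{T}^*\neq\mathrm{T}_0$. Thus $\mathrm{T}^*\in(\mathrm{T}_0,c)$, and it coincides with the stated infimum over $(\mathrm{T}_0,c)$, because $\varphi<\frac{\mu_1}{\Delta\mu}$ on $(0,\mathrm{T}^*)$. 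Finally, if $r$ is strictly increasing on $(0,c]$, then $\varphi'>0$ there (or, via the identity, $\varphi$ is strictly increasing). So the equation has exactly one root in $(0,c)$, and this root is $\mathrm{T}^*$. Any further roots beyond $\tmax$ correspond to local minima of $\mathcal{M}$, not maxima, so $\mathrm{T}^*$ is the unique maximizing solution.
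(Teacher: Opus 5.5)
Your proposal is correct. Its first part (the sign of $\mathcal{M}'$ via $\varphi$, the monotonicity of $\varphi$ inherited from $r$, and the intermediate value argument on $[0,c]$ using $\varphi(0)=1$ and $\varphi(c)=\fa\, r(c)$) matches the paper. Incidentally, your value $\varphi(c)=\fa\, r(c)$ is the right one; the paper's ``$\fa\, r(c)-1$'' is a slip.

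The genuine difference is how you rule out a larger value of $\mathcal{M}$ on $[\tmax,\infty)$. The paper splits into cases according to whether $\lim_{t\to\infty}r(t)>1/K$. In the unfavourable case it invokes $\mathrm{T}_1=\inf\{\mathrm{T}>c: m(\mathrm{T})=K\}$ from Proposition \ref{prop: T0 - UBFR}$\,b)$ and the identity
\[
\varphi(\mathrm{T}_1)=\frac{\mu_1}{\Delta\mu}+m'(\mathrm{T}_1)\Bigl(\frac{\mu_1}{\Delta\mu}-\overline{F}(\mathrm{T}_1)\Bigr)\ge\frac{\mu_1}{\Delta\mu}.
\]
This shows that $\mathcal{M}$ keeps decreasing on the whole beneficial window $[\mathrm{T}^*,\mathrm{T}_1]$, after which the paper uses $\mathcal{M}\le\mathcal{M}(\infty)$ beyond $\mathrm{T}_1$.

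You avoid the case split, $\mathrm{T}_1$, and that identity altogether. Once $\varphi$ falls below the level it stays below, so $\mathcal{M}$ increases on the final stretch and is bounded there by its limit $\mathcal{M}(\infty)<\mathcal{M}(\mathrm{T}^*)$. Your route is shorter and relies only on $\lim_{\mathrm{T}\to\infty}\mathcal{M}(\mathrm{T})=\mathcal{M}(\infty)$ plus one point where maintenance is beneficial. The paper's route makes the link between $\varphi$ and the threshold structure of $m$ explicit. Your argument recovers that link as a byproduct: $m\ge K$ on $[\mathrm{T}_2,\infty)$ forces $\mathrm{T}_2\ge\mathrm{T}_1$.

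You also do two things the paper leaves implicit. You check explicitly that $\mathrm{T}^*\in(\mathrm{T}_0,c)$ via $m(\mathrm{T}_0)=K$, so that $\mathcal{M}(\mathrm{T}_0)=\mathcal{M}(\infty)$. And you correctly read the uniqueness claim as uniqueness in $(0,c)$, since a second root beyond $\tmax$ does occur whenever $\lim_{t\to\infty}r(t)<1/K$.
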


\begin{proof} 		
Recall that for $\mathrm{T}> 0$,
\begin{equation}\label{eq : signe of h'}
	\mathcal{M}'(\mathrm{T}) =_{\mathrm{sg}}{\frac{ \mu_1 }{\Delta \mu}-\varphi(\mathrm{T})}; \quad \quad \varphi'(\mathrm{T}) =  r'(\mathrm{T})\int_{0}^{\mathrm{T}}\overline{F}(x)\,\mathrm{d}x. 
\end{equation} 
We restrict attention to the case where $r$ is differentiable, since the general case follows from the same argument as in Proposition \ref{prop : T opt}. From the monotonicity intervals of $r$ and the expression for $\varphi'$, it follows that $\varphi$ increases on $[0,\tmax]$ and decreases on $[\tmax,\infty)$. 

Moreover,
\[
\varphi(0)=1 \quad \text{and} \quad \varphi(c) = \fa \, r(c) - 1 > \frac{ \mu_1 }{\Delta \mu}>1.
\]
By continuity of $\varphi$, there exists $\mathrm{T}\in(0,c)$ such that $\varphi(\mathrm{T}) = \frac{ \mu_1 }{\Delta \mu}$. If $r$ is strictly increasing on $(0,c]$, then $\displaystyle \mathrm{T}^*$ is the unique solution to $\varphi(\mathrm{T}) = \frac{ \mu_1 }{ \Delta\mu}$; otherwise, we define 
\[
\mathrm{T}^* = \inf\left\{\mathrm{T} \in (0,c): \varphi(\mathrm{T}) = \frac{ \mu_1 }{ \Delta\mu} \right\}.
\]
Since $\varphi$ increases on $[0,\tmax]$, equation \eqref{eq : signe of h'} implies that $\mathcal{M}'(\mathrm{T}) \geq 0$ for $\mathrm{T}\leq \mathrm{T}^*$, while $\mathcal{M}'(\mathrm{T}) \leq 0$ for $\mathrm{T}^*\leq \mathrm{T}\leq \tmax$. Hence $\mathrm{T}^*$ is a local maximizer of $\mathcal{M}$.

To prove that $\mathrm{T}^*$ is the global maximizer, we distinguish the two cases of Proposition \ref{prop: T0 - UBFR}.

\begin{itemize}
\item[$a)$] Suppose that $\displaystyle \lim_{t\to\infty} r(t)> \frac{1}{K}$. Since $\varphi$ decreases on $[\tmax,\infty)$, we obtain 
\[
\displaystyle \varphi(\mathrm{T}) \geq \lim_{t\to\infty} \varphi(t) >  \frac{ \mu_1 }{ \Delta\mu}, \quad \mathrm{T}\geq \tmax.
\]
Thus, $\mathcal{M}'(\mathrm{T}) \leq 0$ for all $\mathrm{T}\geq \tmax$, and therefore $\mathrm{T}^*$ is the optimal maintenance time.

\item[$b)$] Suppose now that $\displaystyle \lim_{t\to\infty} r(t)\leq \frac{1}{K}$. From Proposition \ref{prop: T0 - UBFR}, there exists 
\[
\mathrm{T}_1 = \inf \{ \mathrm{T} > c : m(\mathrm{T}) = K \},
\]
and maintenance is beneficial only on $(\mathrm{T}_0,\mathrm{T}_1)$. If $\mathrm{T}_1 \leq \tmax$, then the previous argument already implies that $\mathrm{T}^*$ is optimal.

Assume instead that $\mathrm{T}_1> \tmax$. Since $m'(\mathrm{T}_1)\geq 0$ and $m(\mathrm{T}_1)=K$, we obtain
\[
\varphi(\mathrm{T}_1) = \frac{\fa(m'(\mathrm{T}_1)+1)}{m(\mathrm{T}_1)} - m'(\mathrm{T}_1) \overline{F}(\mathrm{T}_1) = \frac{\mu_1}{\Delta \mu} + m'(\mathrm{T}_1) \left( \frac{\mu_1}{\Delta \mu} - \overline{F}(\mathrm{T}_1) \right).
\]
Given that $\overline{F}(\mathrm{T}_1)\leq 1 < \frac{\mu_1}{\Delta \mu}$ it follows that $\varphi(\mathrm{T}_1) \geq \frac{\mu_1}{\Delta \mu}$. Since $\varphi$ decreases on $[\tmax,\infty)$, we get
\[
\varphi(\mathrm{T}) \geq \varphi(\mathrm{T}_1)\geq \frac{\mu_1}{\Delta \mu}, \quad \tmax \leq \mathrm{T}\leq \mathrm{T}_1.
\]
Therefore $\mathcal{M}'(\mathrm{T})\leq 0$ on $[\tmax,\mathrm{T}_1)$ and we conclude that $\mathrm{T}^*$ is the optimal maintenance time.
\end{itemize}
\end{proof}

\section{Numerical examples}\label{Sec 4}
This section presents numerical examples that illustrate the theoretical results of Sections \ref{Sec 3.1} and \ref{Sec 3.2}. Assuming that repair and maintenance times are exponentially distributed, we analyze both the maintenance threshold time and the optimal maintenance time for two lifetime distributions of the main unit, one exhibiting a bathtub-shaped hazard rate and the other an upside-down bathtub-shaped hazard rate. In each case, the hazard rate and the difference $\mathcal{M}(\mathrm{T})-\mathcal{M}(\infty)$ are plotted, providing numerical confirmation of the theoretical findings.

Throughout this section, we assume that $Y_j$ follows an exponential distribution with rate $\gamma_j$, for $j=1,2$.

\subsection{Bathtub-shaped hazard rate}
Let $Z_1$ and $Z_2$ be independent Weibull random variables with shape parameters $k_1<1$ and $k_2>1$, and scale parameters $\beta_1>0$ and $\beta_2>0$, respectively. That is,
\[
Z_i \sim \mathrm{Weibull}(\beta_i, k_i), \quad i=1,2. 
\]
Define $X_1 = Z_1 \wedge Z_2$. This construction admits two interpretations. First, $X_1$ can be viewed as the lifetime of a system consisting of two Weibull units arranged in series, so that failure occurs when the first unit fails. Second, it can be interpreted as a reduced-form model in which the main unit fails either due to early failures or due to aging and usage.
 
Since the hazard rate of the minimum of independent random variables equals the sum of their hazard rates, the hazard rate of $X_1$ is given by
\begin{align}\label{hazard rate bfr example}
r(t) = \frac{k_1}{\beta_1} \left(\frac{t}{\beta_1}\right)^{k_1 - 1} + \frac{k_2}{\beta_2} \left(\frac{t}{\beta_2 } \right)^{k_2 - 1}, \qquad t>0. 
\end{align}
It is well known (see, for example, \cite{Marshall2007}, Chapter 4.D) that the hazard rate defined in \eqref{hazard rate bfr example} is bathtub-shaped, as illustrated in Figure \ref{fig:hazard BFR}. Moreover, since 
\[
\displaystyle \lim_{t\to\infty} r(t) = \infty,
\] 
the existence of both the threshold time $\mathrm{T}_0$ and the optimal maintenance time $\mathrm{T}^*$ follows from Propositions \ref{prop : T_0} and \ref{prop : T opt}.

A straightforward calculation yields
\[
\mathcal{M}(\mathrm{T}) = \frac{1}{\lambda} \left(1 + \frac{\displaystyle (\lambda + \gamma_1)(\lambda + \gamma_2) \int_{0}^{\mathrm{T}} \exp\left\{-\left( \frac{t}{\beta_1}\right)^{k_1} - \left( \frac{t}{\beta_2}\right)^{k_2} \right\} \,\mathrm{d}t}{\displaystyle \lambda + \gamma_2 - \left(\gamma_2- \gamma_1\right) \,\exp\left\{-\left(\frac{\mathrm{T}}{\beta_1}\right)^{k_1} - \left( \frac{\mathrm{T}}{\beta_2}\right)^{k_2} \right\} }\right), \quad \mathrm{T}>0,
\]
and
\[
\mathcal{M}(\infty) = \frac{1}{\lambda}\left(1+(\lambda + \gamma_1 )\displaystyle \int_{0}^{\infty} \exp\left\{-\left( \frac{t}{\beta_1}\right)^{k_1} - \left( \frac{t}{\beta_2}\right)^{k_2} \right\} \,\mathrm{d}t\right).
\]
Using Proposition \ref{prop : T_0}, the threshold time $\mathrm{T}_0$ is computed numerically as the unique solution of \[m(t) = \frac{\fa(\gamma_2 - \gamma_1)}{\lambda+\gamma_2}, \quad \text{where} \,\,\,
m(t) = \frac{ \displaystyle\int_t^\infty \exp\left\{-\left( \frac{x}{\beta_1}\right)^{k_1} - \left( \frac{x}{\beta_2}\right)^{k_2} \right\} \,\mathrm{d} x}{\exp\left\{-\left( \frac{t}{\beta_1}\right)^{k_1} - \left( \frac{t}{\beta_2}\right)^{k_2} \right\}}. 
\]

From Proposition \ref{prop : T opt}, the optimal maintenance time $\mathrm{T}^*$ is the unique solution to
\[
\left( \frac{k_1}{\beta_1} \left(\frac{t}{\beta_1}\right)^{k_1 - 1} + \frac{k_2}{\beta_2} \left(\frac{t}{\beta_2 } \right)^{k_2 - 1} \right)\int_{0}^{t} \mathrm{e}^{-\left( \frac{x}{\beta_1}\right)^{k_1} - \left( \frac{x}{\beta_2}\right)^{k_2}} \mathrm{d}x + \mathrm{e}^{-\left( \frac{t}{\beta_1}\right)^{k_1} - \left( \frac{t}{\beta_2}\right)^{k_2}} = \frac{\lambda+\gamma_2}{\gamma_2 - \gamma_1}.
\]
Figure \ref{fig:diff BFR} plots the difference $\mathcal{M}(\mathrm{T}) - \mathcal{M}(\infty)$. Across all numerical experiments, $\mathrm{T}_0$ lies in the interval where the hazard rate is increasing. Graphically, it can be observed that the sharper the curvature of the hazard rate, the larger the values of both $\mathrm{T}_0$ and $\mathrm{T}^*$.

\captionsetup[subfigure]{labelformat=empty}
\begin{figure}[htbp]
\centering
\begin{subfigure}{0.48\textwidth}
    \centering
    \includegraphics[width=\linewidth]{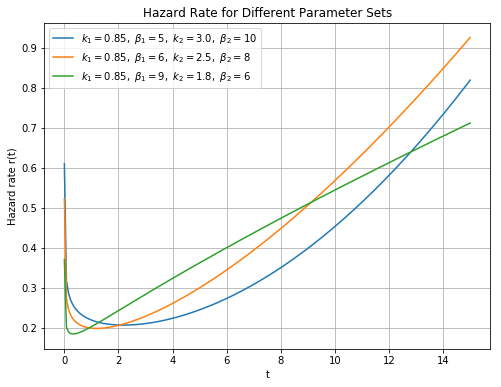}
    \caption{$a)$ Hazard rate function defined in \eqref{hazard rate bfr example}. }
    \label{fig:hazard BFR}
\end{subfigure}
\hfill
\begin{subfigure}{0.48\textwidth}
    \centering
    \includegraphics[width=\linewidth]{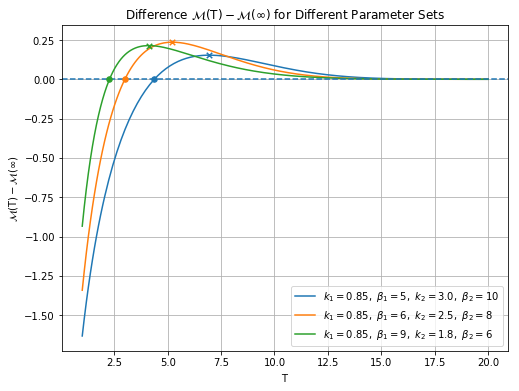}
    \caption{$b)$ Difference $\mathcal{M}(\mathrm{T})- \mathcal{M}(\infty)$.}
    \label{fig:diff BFR}
\end{subfigure}
\caption{$\mathrm{BFR}$ example with $\gamma_1 = 0.001$, $\gamma_2 = 4$ and $\lambda =1.0$.}
\end{figure}

\subsection{Upside-down bathtub-shaped hazard rate}

Let $Z_1$ and $Z_2$ be independent exponential random variables with different rates $\beta_1>0$ and $\beta_2>0$, respectively. That is,
\[
Z_i \sim \mathrm{Exponential}(\beta_i), \quad i=1,2. 
\]
Define $X_1 = Z_1 \vee Z_2$. The main unit can be  interpreted as a system composed of two subunits arranged in parallel, where failure occurs only when both subunits have failed. The hazard rate of $X_1$ is
\begin{align}\label{hazard rate ubfr example}
r(t) = \frac{ \beta_1 \,\mathrm{e}^{\beta_2 t} + \beta_2\, \mathrm{e}^{\beta_1 t} - \beta_1-\beta_2}{ \mathrm{e}^{\beta_1 t} + \mathrm{e}^{\beta_2 t} - 1 }, \qquad t>0. 
\end{align}
As illustrated in Figure \ref{fig:hazard ubfr}, the hazard rate has an upside-down bathtub shape. Moreover $r(0)=0$ and \[\displaystyle \lim_{t\to\infty} r(t) = \min\{\beta_1, \beta_2\}.\] Setting $\gamma_1 = 0.01$, $\gamma_2 = 6$, and $\lambda =0.1$, we verify graphically that the condition $m(c)<K$ holds, where $c = \arg\min m$. This condition ensures the existence of both $\mathrm{T}_0$ and $\mathrm{T}^*$, as established in Propositions \ref{prop: T0 - UBFR} and \ref{prop: T opt - UBFR}. 

We obtain
\[
\mathcal{M}(\mathrm{T}) = \frac{1}{\lambda} \left(1 + \frac{\displaystyle (\lambda + \gamma_1) (\lambda + \gamma_2) \left( \frac{(1-\mathrm{e}^{-\beta_1 \mathrm{T} })}{\beta_1} + \frac{(1 - \mathrm{e}^{-\beta_2 \mathrm{T} })}{\beta_2} - \frac{(1-\mathrm{e}^{-(\beta_1 + \beta_2)\mathrm{T} })}{\beta_1+\beta_2} \right)}{ \lambda + \gamma_2 - \left( \gamma_2 - \gamma_1 \right) \,\left(\mathrm{e}^{-\beta_1 \mathrm{T} }+ \mathrm{e}^{-\beta_2 \mathrm{T} } - \mathrm{e}^{-(\beta_1 + \beta_2)\mathrm{T} }\right) }\right)\text{ ,}
\]
and
\[
\mathcal{M}(\infty) = \frac{1}{\lambda}\left(1+(\lambda + \gamma_1 ) \left( \frac{1}{\beta_1} + \frac{1}{\beta_2}  - \frac{1}{\beta_1+\beta_2} \right) \right).
\]
From Proposition \ref{prop: T0 - UBFR}, the threshold time $\mathrm{T}_0$ is the solution to
\[
\frac{ \frac{\mathrm{e}^{\beta_2 t}}{\beta_1} + \frac{\mathrm{e}^{\beta_1 t}}{\beta_2} - \frac{1}{\beta_1 + \beta_2} }{\mathrm{e}^{\beta_1 t}+ \mathrm{e}^{\beta_2 t} - 1} = \frac{(\gamma_2 - \gamma_1)}{\lambda+\gamma_2} \left( \frac{1}{\beta_1} + \frac{1}{\beta_2} - \frac{1}{\beta_1 + \beta_2} \right)
\]
Moreover, using Proposition \ref{prop: T opt - UBFR}, the optimal maintenance time $\mathrm{T}^*$ is the unique solution to
\[
r(t) \left( \frac{(1-\mathrm{e}^{-\beta_1 t})}{\beta_1} + \frac{(1 - \mathrm{e}^{-\beta_2 t})}{\beta_2} - \frac{(1-\mathrm{e}^{-(\beta_1 + \beta_2)t})}{\beta_1+\beta_2} \right) + \mathrm{e}^{-\beta_1 t} + \mathrm{e}^{-\beta_2 t} - \mathrm{e}^{-(\beta_1 + \beta_2)t } = \frac{\lambda+\gamma_2}{(\gamma_2 - \gamma_1)}.
\]
Numerical values of $\mathrm{T}_0$ and $\mathrm{T}^*$ for different parameter sets are shown in Figure \ref{fig:diff ubfr}. In particular, for the green curve, preventive maintenance is beneficial only over a finite time interval. This behavior is consistent with the fact that 
\[
\displaystyle \lim_{t\to\infty} r(t) = \min\{\beta_1, \beta_2\}< \frac{1}{K}=\frac{(\gamma_2 - \gamma_1)}{\lambda+\gamma_2} \left( \frac{1}{\beta_1} + \frac{1}{\beta_2} - \frac{1}{\beta_1 + \beta_2} \right).
\]
\begin{figure}[htbp]
\centering
\begin{subfigure}{0.48\textwidth}
    \centering
    \includegraphics[width=\linewidth]{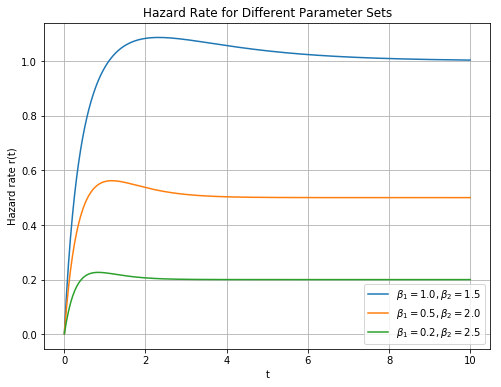}
    \caption{$a)$ Hazard rate function defined in \eqref{hazard rate ubfr example}.}
    \label{fig:hazard ubfr}
\end{subfigure}
\hfill
\begin{subfigure}{0.48\textwidth}
    \centering
    \includegraphics[width=\linewidth]{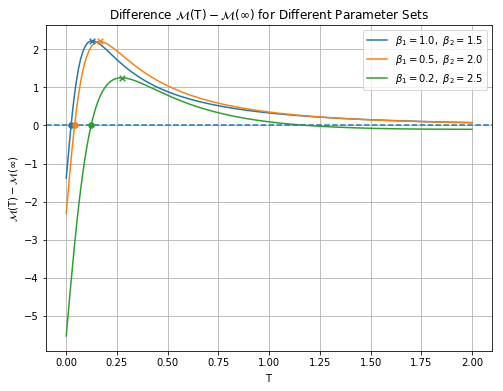}
    \caption{$b)$ Difference $\mathcal{M}(\mathrm{T})- \mathcal{M}(\infty)$.}
    \label{fig:diff ubfr}
\end{subfigure}
\caption{$\mathrm{UBFR}$ example with $\gamma_1 = 0.01$, $\gamma_2 = 6$ and $\lambda =0.1$.}

\end{figure}

\section{Stochastic orderings}  \label{Sec 5}
In this section, we establish necessary and sufficient conditions for stochastic comparisons between two independent priority standby redundant systems with preventive maintenance. From this point onward, let $X_{ik}$ denote the lifetime of unit $k$ in system $i$, where $i,k=1,2$. For maintenance and repair times, we use the notation $Y_{ij}$, where $j=1$ corresponds to the repair time and $j=2$ to the preventive maintenance time in system $i=1,2$. We denote by $\lambda_i$ the rate of the exponential distribution of $X_{i2}$ for $i=1,2$. In each case, the corresponding survival functions, expected values, hazard rates, and related quantities are denoted analogously and in accordance with the notations established in previous sections.

To ensure that maintenance is beneficial in both systems, we assume that
\[
\Delta_{i} \mu = \mu_{i1} - \mu_{i2} = \mathbb{E}[X_{i2} \wedge Y_{i1}] - \mathbb{E}[X_{i2} \wedge Y_{i2}]>0, \quad i=1,2.
\]

\subsection{Stochastic ordering for the comparison of mean lifetimes}

We assume that maintenance in both systems is scheduled at time $\mathrm{T}>0$, and we denote by $\mathcal{M}_i(\mathrm{T})$ the mean lifetime of system $i$, $i=1,2$. The next proposition summarizes the main comparison result. 

\begin{proposition}\label{prop MTTF comp} 
Let $\mathrm{T}>0$. Assume that $X_{11}\geq_{\mathrm{st}} X_{21}$, $\lambda_1\leq\lambda_{2}$, and $Y_{1j}\leq_{\mathrm{Lt}} Y_{2j}$ for $j=1,2$. Then $\displaystyle \mathcal{M}_{1}(\mathrm{T}) \geq \mathcal{M}_{2}(\mathrm{T})$.
\end{proposition}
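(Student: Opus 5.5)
The plan is to rewrite $\mathcal{M}_i(\mathrm{T})$ in a form where every ingredient is a probability or an integral of a survival function. Each of these can then be compared term by term using the three hypotheses. The key observation is that, since $X_{i2}$ is exponential with rate $\lambda_i$,
\[
\lambda_i\mu_{ij} = \lambda_i\,\mathbb{E}[X_{i2}\wedge Y_{ij}] = \lambda_i\int_0^\infty \mathrm{e}^{-\lambda_i t}\,\overline{G}_{ij}(t)\,\mathrm{d}t = \mathbb{P}(X_{i2}<Y_{ij}) = 1-\mathbb{E}\bigl[\mathrm{e}^{-\lambda_i Y_{ij}}\bigr] =: q_{ij}.
\]
Multiplying numerator and denominator in \eqref{mean time - priority and PM system} by $\lambda_i$ then gives
\[
\mathcal{M}_i(\mathrm{T}) = \frac{1}{\lambda_i} + \frac{\int_0^{\mathrm{T}}\overline{F}_{i1}(t)\,\mathrm{d}t}{p_i(\mathrm{T})},
\qquad
p_i(\mathrm{T}) = q_{i1}F_{i1}(\mathrm{T}) + q_{i2}\overline{F}_{i1}(\mathrm{T}).
\]
Here $p_i(\mathrm{T})$ is exactly the geometric parameter $p(\mathrm{T})$ of Section~\ref{Sec 2} for system $i$. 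It then suffices to prove three inequalities: $1/\lambda_1\ge 1/\lambda_2$, $\int_0^{\mathrm{T}}\overline{F}_{11}\ge\int_0^{\mathrm{T}}\overline{F}_{21}$, and $p_1(\mathrm{T})\le p_2(\mathrm{T})$.

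The first inequality is just $\lambda_1\le\lambda_2$. The second follows from $X_{11}\ge_{\mathrm{st}}X_{21}$, since $\overline{F}_{11}\ge\overline{F}_{21}$ pointwise. For the third, I will first compare $q_{1j}$ and $q_{2j}$. Integrating by parts, $\widehat{\overline{G}}_{ij}(s) = \bigl(1-\mathbb{E}[\mathrm{e}^{-sY_{ij}}]\bigr)/s$. Hence $Y_{1j}\le_{\mathrm{Lt}}Y_{2j}$ in the sense of Definition~\ref{stoch ord} means $\mathbb{E}[\mathrm{e}^{-sY_{1j}}]\ge\mathbb{E}[\mathrm{e}^{-sY_{2j}}]$ for all $s\ge0$. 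Using this at $s=\lambda_1$, and then the fact that $s\mapsto 1-\mathbb{E}[\mathrm{e}^{-sY_{2j}}]$ is increasing together with $\lambda_1\le\lambda_2$, yields
\[
q_{1j} = 1-\mathbb{E}[\mathrm{e}^{-\lambda_1Y_{1j}}] \le 1-\mathbb{E}[\mathrm{e}^{-\lambda_1Y_{2j}}] \le 1-\mathbb{E}[\mathrm{e}^{-\lambda_2Y_{2j}}] = q_{2j}, \qquad j=1,2 .
\]

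The main point requiring care is combining these into $p_1(\mathrm{T})\le p_2(\mathrm{T})$, because the weights $F_{i1}(\mathrm{T})$ also differ between the systems. I will split this into two steps. First, keeping system~1's weights fixed, $p_1(\mathrm{T}) = q_{11}F_{11}(\mathrm{T})+q_{12}\overline{F}_{11}(\mathrm{T}) \le q_{21}F_{11}(\mathrm{T})+q_{22}\overline{F}_{11}(\mathrm{T})$. Second, the assumption $\Delta_2\mu>0$ gives $q_{21}>q_{22}$, so the convex combination $q_{21}x+q_{22}(1-x)$ is increasing in $x$. Since $F_{11}(\mathrm{T})\le F_{21}(\mathrm{T})$ by the usual stochastic order, the right-hand side is at most $q_{21}F_{21}(\mathrm{T})+q_{22}\overline{F}_{21}(\mathrm{T}) = p_2(\mathrm{T})$.

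Since $p_1(\mathrm{T})>0$, the three inequalities together give $\mathcal{M}_1(\mathrm{T})\ge\mathcal{M}_2(\mathrm{T})$.
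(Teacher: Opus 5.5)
Your proof is correct, but it is organized differently from the paper's. The paper never compares the two systems in one step. Instead it proves three one-factor-at-a-time results: Propositions \ref{prop: comp M simplest case}, \ref{prop: comp M lambda} and \ref{mttf comparison 3}, in which only the main-unit lifetimes, only the standby rates, or only the repair/maintenance times differ. It then obtains the statement by implicitly chaining these through hybrid intermediate systems.

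You instead use the single decomposition $\mathcal{M}_i(\mathrm{T})=1/\lambda_i+\int_0^{\mathrm{T}}\overline{F}_{i1}(t)\,\mathrm{d}t/p_i(\mathrm{T})$ and compare its three pieces directly. Your chain giving $q_{1j}\le q_{2j}$ merges the paper's $\lambda$-step and Laplace-step into one argument. Its second inequality is exactly the paper's $\mathbb{E}[\overline{G}_j(X_{12})]\le\mathbb{E}[\overline{G}_j(X_{22})]$, rewritten through Laplace--Stieltjes transforms. Your treatment of $X_{11}\ge_{\mathrm{st}}X_{21}$ is more elementary than the paper's. The paper cross-multiplies and reduces to the ordering of $g(\mathrm{T})=\int_0^{\mathrm{T}}\overline{F}(x)\,\mathrm{d}x/F(\mathrm{T})$. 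You simply note that the numerator grows while $q_{22}+(q_{21}-q_{22})F(\mathrm{T})$ shrinks as the survival function increases.

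Your version also makes transparent where $\Delta\mu>0$ is needed: only as $q_{21}>q_{22}$. In the paper it enters through the step ``$\mu_1>\Delta\mu\ge 0$'' in the proof of Proposition \ref{prop: comp M simplest case}. As a result, the implicit chaining has to carry out the lifetime step in a hybrid system that shares $(\lambda,Y)$ with system 1 or system 2. For the mixed hybrids, the sign of $\Delta\mu$ is not controlled.

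In exchange, the paper's modular route yields three stand-alone comparison results. Your direct argument shows that the statement actually only requires:
\begin{itemize}
\item $\lambda_1\le\lambda_2$;
\item $\lambda_1\mu_{1j}\le\lambda_2\mu_{2j}$ for $j=1,2$;
\item $\int_0^{\mathrm{T}}\overline{F}_{11}(t)\,\mathrm{d}t\ge\int_0^{\mathrm{T}}\overline{F}_{21}(t)\,\mathrm{d}t$;
\item $F_{11}(\mathrm{T})\le F_{21}(\mathrm{T})$.
\end{itemize}
These are weaker than the stated stochastic-order hypotheses.
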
 

The proof of Proposition \ref{prop MTTF comp} is a direct consequence of the three results established below.

\begin{proposition}\label{prop: comp M simplest case}
Let $\mathrm{T}>0$, $\lambda_1=\lambda_2$, and $Y_{1j} =_{\mathrm{st}} Y_{2j}$ for $j=1,2$. If $X_{11} \geq_{\mathrm{st}} X_{21}$, then $\mathcal{M}_1(\mathrm{T}) \geq \mathcal{M}_2(\mathrm{T})$.
\end{proposition}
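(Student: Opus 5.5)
With $\lambda$, $\mu_1$, $\mu_2$ and $\Delta\mu$ common to both systems, formula \eqref{mean time - priority and PM system} shows that $\mathcal{M}_i(\mathrm{T})$ depends on the priority unit only through
\[
\Psi_i(\mathrm{T})=\frac{\int_0^{\mathrm{T}}\overline{F}_i(t)\,\mathrm{d}t}{\mu_1-\Delta\mu\,\overline{F}_i(\mathrm{T})}, \qquad i=1,2,
\]
where $\overline{F}_i$ is the survival function of $X_{i1}$. It therefore suffices to prove $\Psi_1(\mathrm{T})\ge\Psi_2(\mathrm{T})$. We compare numerators and denominators separately.

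\emph{Numerators.} The hypothesis $X_{11}\geq_{\mathrm{st}}X_{21}$ gives $\overline{F}_1(t)\ge\overline{F}_2(t)$ for all $t$. Integrating over $[0,\mathrm{T}]$ yields $\int_0^{\mathrm{T}}\overline{F}_1\ge\int_0^{\mathrm{T}}\overline{F}_2\ge 0$.

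\emph{Denominators.} Here the sign of $\Delta\mu$ matters. This section assumes $\Delta\mu>0$, so $\overline{F}_1(\mathrm{T})\ge\overline{F}_2(\mathrm{T})$ implies
\[
0<\mu_1-\Delta\mu\,\overline{F}_1(\mathrm{T})\le \mu_1-\Delta\mu\,\overline{F}_2(\mathrm{T}).
\]
Positivity holds because the left-hand side is at least $\mu_1-\Delta\mu=\mu_2>0$. Alternatively, the denominator equals $p(\mathrm{T})/\lambda$ with $p(\mathrm{T})$ a probability, and $\mu_2>0$ because $X_2\wedge Y_2$ is a positive random variable, given $G_2(0)=0$.

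\emph{Conclusion.} The first system has the larger nonnegative numerator and the smaller positive denominator, so $\Psi_1(\mathrm{T})\ge\Psi_2(\mathrm{T})$. Multiplying by $1/\lambda$ and adding $1/\lambda$ gives $\mathcal{M}_1(\mathrm{T})\ge\mathcal{M}_2(\mathrm{T})$.

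The only delicate step is the denominator comparison, which relies on $\Delta\mu>0$. I would state explicitly that the standing assumption $\Delta_i\mu>0$ is being used. If $\Delta\mu\le 0$, the denominator ordering could reverse and the two effects would have to be weighed against each other, so this assumption is essential to the argument.
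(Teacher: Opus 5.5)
Your argument is correct, and it is more direct than the paper's. The paper cross-multiplies the two mean-time formulas and uses $\mu_1>\Delta\mu\ge 0$ to reduce the claim to $F_2(\mathrm{T})\int_0^{\mathrm{T}}\overline{F}_1(t)\,\mathrm{d}t\ge F_1(\mathrm{T})\int_0^{\mathrm{T}}\overline{F}_2(t)\,\mathrm{d}t$. This is an ordering at $\mathrm{T}$ of the mean time to failure until replacement function $g(t)=\int_0^t\overline{F}(x)\,\mathrm{d}x/F(t)$, which the paper then deduces from $X_{11}\geq_{\mathrm{st}}X_{21}$. You instead observe that $x\mapsto\mu_1-\Delta\mu\,x$ is decreasing and bounded below by $\mu_2>0$ on $[0,1]$. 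Hence the stochastically larger priority unit gives a larger numerator and a smaller positive denominator.

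Both proofs rest on the same three facts: $\int_0^{\mathrm{T}}\overline{F}_1\ge\int_0^{\mathrm{T}}\overline{F}_2$, $\overline{F}_1(\mathrm{T})\ge\overline{F}_2(\mathrm{T})$, and $\Delta\mu\ge 0$. Yours is shorter and makes explicit the positivity of the denominator, which the paper's cross-multiplication leaves implicit. The paper's reduction instead isolates a formally weaker sufficient condition: the integral inequality together with $g_1(\mathrm{T})\ge g_2(\mathrm{T})$. That condition can hold even when $\overline{F}_1(\mathrm{T})<\overline{F}_2(\mathrm{T})$, and it ties the result to a known reliability function.

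Your caveat about $\Delta\mu$ is well placed. In fact the sign condition is needed for the statement itself, not only for your argument. If $\Delta\mu<0$, let $X_{11}$ exceed $\mathrm{T}$ with probability close to one and let $X_{21}$ be concentrated just below $\mathrm{T}$. Then $\Psi_1\approx\mathrm{T}/\mu_2<\mathrm{T}/\mu_1\approx\Psi_2$.
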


\begin{proof}
Since $\lambda_1=\lambda_2$ and $Y_{1j} =_{\mathrm{st}} Y_{2j}$, we have $\mu_{1j} = \mu_{2j}$, and we denote this common value by $\mu_j$, for $j=1,2$. From \eqref{mean time - priority and PM system}, it follows that $\mathcal{M}_1(\mathrm{T})\geq \mathcal{M}_2(\mathrm{T})$ if and only if 
\[
\mu_1 \left( \int_0^\mathrm{T} \overline{F}_1 (t) \mathrm{d}t - \int_0^\mathrm{T} \overline{F}_2 (t) \mathrm{d}t \right) -\Delta\mu \left( \overline{F}_2 (\mathrm{T}) \int_0^\mathrm{T} \overline{F}_1 (t) \mathrm{d}t - \overline{F}_1 (\mathrm{T}) \int_0^\mathrm{T} \overline{F}_2 (t) \mathrm{d}t \right) \geq 0,
\]
where $\Delta \mu = \mu_1 - \mu_2$. Since $\mu_1 > \Delta \mu \geq 0$, it suffices to show that
\[
\int_0^\mathrm{T} \overline{F}_1 (t) \mathrm{d}t - \int_0^\mathrm{T} \overline{F}_2 (t) \mathrm{d}t \geq \overline{F}_2 (\mathrm{T}) \int_0^\mathrm{T} \overline{F}_1 (t) \mathrm{d}t - \overline{F}_1 (\mathrm{T}) \int_0^\mathrm{T} \overline{F}_2 (t) \mathrm{d}t.
\]
The last inequality is equivalent to 
\begin{equation}\label{eq: DMTFR order}
\frac{\int_0^\mathrm{T} \overline{F}_1 (t)\, \mathrm{d}t}{ F_1(\mathrm{T})} \geq \frac{\int_0^\mathrm{T} \overline{F}_2 (t) \, \mathrm{d}t}{ F_2(\mathrm{T})}.
\end{equation}
This condition can be interpreted in terms of the \textit{mean time to failure until replacement function} (see, e.g., \cite{kayid2016laplace, Corujo_Valdes_2022}), defined by 
\[
g(t) = \frac{\int_0^t \overline{F} (x)\, \mathrm{d}x}{ F(t)}, \quad t>0.
\]
Thus, the result follows if the mean time to failure until replacement function of $X_{11}$ is greater than or equal to that of $X_{21}$ at time $\mathrm{T}$. This condition follows directly  from $X_{11} \geq_{\mathrm{st}} X_{21}$.
\end{proof}

The following proposition establishes the comparison when the rates of the standby unit lifetimes are ordered, while the remaining variables are identically distributed. 

\begin{proposition}\label{prop: comp M lambda}
Let $\mathrm{T}>0$. Assume that $X_{11}=_{\mathrm{st}} X_{21}$ and $Y_{1j}=_{\mathrm{st}} Y_{2j}$ for $j=1,2$. If $\lambda_1\leq \lambda_2$, then $\displaystyle \mathcal{M}_1(\mathrm{T}) \geq \mathcal{M}_2(\mathrm{T})$.
\end{proposition}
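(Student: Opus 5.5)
The plan is to treat $\mathcal{M}(\mathrm{T})$ in \eqref{mean time - priority and PM system} as a function of the single parameter $\lambda$. Since $X_{11}=_{\mathrm{st}}X_{21}$ and $Y_{1j}=_{\mathrm{st}}Y_{2j}$, every ingredient except the standby rate is common to both systems. I will then show that this function is decreasing in $\lambda$. The key is to make the dependence of $\mu_1,\mu_2$ on $\lambda$ explicit. Since $X_2$ is exponential with rate $\lambda$ and independent of $Y_j$,
\[
\mu_j(\lambda)=\mathbb{E}[X_2\wedge Y_j]=\int_0^\infty \mathrm{e}^{-\lambda t}\,\overline{G}_j(t)\,\mathrm{d}t, \qquad j=1,2 .
\]

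Next I rewrite the denominator in \eqref{mean time - priority and PM system} as
\[
\mu_1-\Delta\mu\,\overline{F}(\mathrm{T}) = F(\mathrm{T})\,\mu_1+\overline{F}(\mathrm{T})\,\mu_2 .
\]
Substituting the integral form gives
\[
D(\lambda)=\int_0^\infty \mathrm{e}^{-\lambda t}\,\overline{H}(t)\,\mathrm{d}t,
\qquad \overline{H}=F(\mathrm{T})\,\overline{G}_1+\overline{F}(\mathrm{T})\,\overline{G}_2 .
\]
Here $\overline{H}$ is the survival function of the mixture $Y$ of $Y_1$ and $Y_2$ with weights $F(\mathrm{T})$ and $\overline{F}(\mathrm{T})$. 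With this notation,
\[
\mathcal{M}(\mathrm{T})=\frac{1}{\lambda}+\frac{\int_0^{\mathrm{T}}\overline{F}(t)\,\mathrm{d}t}{\lambda D(\lambda)} .
\]

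The first term $1/\lambda$ is clearly decreasing in $\lambda$, and the numerator $\int_0^{\mathrm{T}}\overline{F}(t)\,\mathrm{d}t$ does not depend on $\lambda$. So it suffices to show that $\lambda D(\lambda)$ is increasing in $\lambda$. This is the only nontrivial step, because $D(\lambda)$ itself decreases in $\lambda$, so a naive monotonicity argument fails. To handle it, integrate by parts, or equivalently note that $\lambda D(\lambda)$ is $\lambda$ times the Laplace transform of a survival function. This gives
\[
\lambda D(\lambda)=\lambda\int_0^\infty \mathrm{e}^{-\lambda t}\,\mathbb{P}(Y>t)\,\mathrm{d}t=1-\mathbb{E}\big[\mathrm{e}^{-\lambda Y}\big],
\]
which is increasing in $\lambda$ because $\mathrm{e}^{-\lambda y}$ is decreasing in $\lambda$ for each $y\ge 0$. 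Probabilistically, $\lambda D(\lambda)=\mathbb{P}(X_2<Y)$, which is exactly $p(\mathrm{T})$, and a faster-failing standby unit is more likely to fail during repair or maintenance.

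Combining the two monotone terms, $\lambda_1\le\lambda_2$ implies $\mathcal{M}_1(\mathrm{T})\ge\mathcal{M}_2(\mathrm{T})$. Note that the argument does not use $\Delta\mu>0$ at all; it only needs $D(\lambda)>0$, which holds because $G_i(0)=0$.
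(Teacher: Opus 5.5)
Your proof is correct and follows essentially the same route as the paper. Both arguments isolate the $1/\lambda$ term and reduce the claim to showing that $\lambda\bigl(\mu_1F(\mathrm{T})+\mu_2\overline{F}(\mathrm{T})\bigr)=p(\mathrm{T})$ is increasing in $\lambda$. The difference is cosmetic: the paper writes $\lambda\mu_j=\mathbb{E}[\overline{G}_j(X_2)]$ and uses the usual stochastic order between the two exponential standby lifetimes, while you condition on the mixed repair/maintenance time and use $\lambda D(\lambda)=1-\mathbb{E}[\mathrm{e}^{-\lambda Y}]$; these are two ways of computing the same probability $\mathbb{P}(X_2<Y)$.
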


\begin{proof}
Let $F$ denote the common distribution function of $X_{11}$ and $X_{21}$. Also, let $Y_{1j}=_{\mathrm{st}} Y_{2j} =_{\mathrm{st}} Y_j$, and let $\overline{G}_j$ denote the survival function of $Y_j$ for $j=1,2$. Define $\mu_{ij}= \mathbb{E}[X_{i2} \wedge Y_{j}]$ for $i,j=1,2$. From \eqref{mean time - priority and PM system}, it suffices to show that
\[
\lambda_1\left(\mu_{11}\,F(\mathrm{T})  + \mu_{12}\,\overline{F}(\mathrm{T}) \right) \leq \lambda_2 \left(\mu_{21}\,F(\mathrm{T})  + \mu_{22}\,\overline{F}(\mathrm{T})  \right).
\]
For $i,j=1,2$, we have
\[
\lambda_i \,\mu_{ij} = \lambda_i \int_{0}^{\infty} \mathrm{e}^{-\lambda_i t}\, \overline{G}_j(t)\,\mathrm{d}t
= \mathbb{E}\!\left[\overline{G}_j\!\left(X_{i2}\right)\right].
\]
Since $\lambda_1\leq \lambda_2$, we have $X_{12} \geq_{\mathrm{st}} X_{22}$. By Characterization \eqref{st} of the usual stochastic order, and since $\overline{G}_j$ is decreasing, it follows that
\begin{align*}
\lambda_1 \,\mu_{1j} = \mathbb{E}\!\left[\overline{G}_j\!\left(X_{12}\right)\right] \leq \mathbb{E}\!\left[\overline{G}_j\!\left(X_{22}\right)\right] = \lambda_2 \,\mu_{2j}.
\end{align*}
Then the result holds.
\end{proof}

If the unit lifetimes are identically distributed, reducing repair and maintenance times increases the mean time to system failure.

\begin{proposition}\label{mttf comparison 3}
Let $\mathrm{T}>0$. Assume that $X_{11}=_{\mathrm{st}} X_{21}$ and $\lambda_1=\lambda_{2}$. If $Y_{1j}\leq_{\mathrm{Lt}} Y_{2j}$ for $j=1,2$, then $\displaystyle \mathcal{M}_{1}(\mathrm{T}) \geq \mathcal{M}_{2}(\mathrm{T})$.
\end{proposition}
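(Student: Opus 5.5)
With $X_{11}=_{\mathrm{st}}X_{21}$ and $\lambda_1=\lambda_2=\lambda$, formula \eqref{mean time - priority and PM system} shows that the two systems differ only through the constants $\mu_{ij}=\mathbb{E}[X_{i2}\wedge Y_{ij}]$. Let $F$ be the common distribution of the priority units. The numerator $\int_0^{\mathrm{T}}\overline{F}(t)\,\mathrm{d}t$ is then common to both systems, so $\mathcal{M}_1(\mathrm{T})\geq\mathcal{M}_2(\mathrm{T})$ is equivalent to a comparison of denominators:
\[
\mu_{11}F(\mathrm{T})+\mu_{12}\overline{F}(\mathrm{T}) \leq \mu_{21}F(\mathrm{T})+\mu_{22}\overline{F}(\mathrm{T}).
\]
This uses that $\mu_1-\Delta\mu\,\overline{F}(\mathrm{T})=\mu_1F(\mathrm{T})+\mu_2\overline{F}(\mathrm{T})$. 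Since $F(\mathrm{T})$ and $\overline{F}(\mathrm{T})$ are nonnegative, it suffices to prove $\mu_{1j}\leq\mu_{2j}$ for $j=1,2$.

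The key step is to write $\mu_{ij}$ as a Laplace transform. For $X_{i2}$ exponential with rate $\lambda$, independent of $Y_{ij}$, the survival function of $X_{i2}\wedge Y_{ij}$ is $\mathrm{e}^{-\lambda t}\overline{G}_{ij}(t)$. Hence
\[
\mu_{ij}=\int_0^\infty \mathrm{e}^{-\lambda t}\,\overline{G}_{ij}(t)\,\mathrm{d}t=\widehat{\overline{G}}_{ij}(\lambda).
\]
By Definition \ref{stoch ord} b), the hypothesis $Y_{1j}\leq_{\mathrm{Lt}}Y_{2j}$ gives $\widehat{\overline{G}}_{1j}(s)\leq\widehat{\overline{G}}_{2j}(s)$ for every $s\geq 0$. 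Taking $s=\lambda$ yields $\mu_{1j}\leq\mu_{2j}$, and the inequality of denominators follows.

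The only point needing care is the direction of the final inequality. The fraction in \eqref{mean time - priority and PM system} has a positive numerator, and its denominator $\mu_1F(\mathrm{T})+\mu_2\overline{F}(\mathrm{T})$ is positive. A smaller denominator therefore gives a larger $\mathcal{M}$, which gives $\mathcal{M}_1(\mathrm{T})\geq\mathcal{M}_2(\mathrm{T})$. There is no real obstacle beyond this. The proof does not use the standing assumption $\Delta_i\mu>0$; it needs only nonnegativity of the weights $F(\mathrm{T})$ and $\overline{F}(\mathrm{T})$.
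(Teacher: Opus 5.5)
Your proof is correct and follows essentially the same route as the paper: reduce to comparing the denominators $\mu_{i1}F(\mathrm{T})+\mu_{i2}\overline{F}(\mathrm{T})$, use $\mu_{1j}\le\mu_{2j}$ as a sufficient condition, and obtain it from $\mu_{ij}=\widehat{\overline{G}}_{ij}(\lambda)$ together with the Laplace transform order evaluated at $s=\lambda$. You also spell out the identity $\mu_1-\Delta\mu\,\overline{F}(\mathrm{T})=\mu_1F(\mathrm{T})+\mu_2\overline{F}(\mathrm{T})$ and the positivity of the numerator and denominator, which the paper leaves implicit.
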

\begin{proof}
Let $F$ denote the common distribution function of $X_{11}$ and $X_{21}$. It suffices to show that 
\[
\mu_{11}\,F(\mathrm{T})  + \mu_{12}\,\overline{F}(\mathrm{T}) \leq \mu_{21}\,F(\mathrm{T}) + \mu_{22}\,\overline{F}(\mathrm{T}).
\]
A sufficient condition for this inequality is that $\mu_{1j} \leq \mu_{2j}$ for $j=1,2$, which can be rewritten as
\[
\widehat{\overline{G}}_{1j}(\lambda)\leq \widehat{\overline{G}}_{2j}(\lambda), \quad j=1,2.
\]
This condition holds whenever $Y_{1j}\leq_{\mathrm{Lt}} Y_{2j}$ for $j=1,2$.
\end{proof}

Although the assumption of Laplace transform ordering guarantees the result in the previous proposition, it is worth noting that the weaker condition $\mu_{1j} \leq \mu_{2j}$, $j=1,2$ is already sufficient. This observation will be useful in subsequent results where comparisons are established via the Laplace transform order.

\subsection{Stochastic ordering for the comparison of maintenance threshold times}

We assume that the lifetimes of both standby units follow exponential distributions with the same parameter, and that the repair and maintenance times are identically distributed. For each system $i$, let $\fa_i = \mathbb{E}\left[X_{i1}\right]$ and $\mathrm{T}_{i0}$ denote the corresponding threshold time, $i=1,2$. 

\begin{proposition}\label{prop: T0 order}
Assume that $\lambda_1 = \lambda_2$ and $Y_{1j} =_{\mathrm{st}} Y_{2j}$ for $j=1,2$. If $X_{11} \geq_{\mathrm{mrl}} X_{21}$ and $\fa_1 = \fa_2$, then $\mathrm{T}_{10} \geq \mathrm{T}_{20}$.
\end{proposition}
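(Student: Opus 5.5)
Since $\lambda_1=\lambda_2$ and $Y_{1j}=_{\mathrm{st}}Y_{2j}$, the quantities $\mu_1$, $\mu_2$ and $\Delta\mu$ coincide for the two systems. Together with $\fa_1=\fa_2$, this gives a common constant
\[
K=\frac{\fa_1\,\Delta\mu}{\mu_1}=\frac{\fa_2\,\Delta\mu}{\mu_1}.
\]
The plan is to compare the two threshold times through the characterization in Proposition \ref{prop : T_0 general},
\[
\mathrm{T}_{i0}=\inf\left\{\mathrm{T}>0 : m_i(\mathrm{T})<K\right\},\qquad i=1,2,
\]
where $m_i$ denotes the mean residual life of $X_{i1}$. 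This reduces the claim to a monotonicity property of $\inf$ with respect to inclusion of sets.

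The key step is to use $X_{11}\geq_{\mathrm{mrl}}X_{21}$, that is, $m_1(t)\ge m_2(t)$ for all $t\ge0$. If $m_1(\mathrm{T})<K$, then $m_2(\mathrm{T})\le m_1(\mathrm{T})<K$. This gives the inclusion
\[
A_K^{(1)}=\{\mathrm{T}>0: m_1(\mathrm{T})<K\}\subseteq A_K^{(2)}=\{\mathrm{T}>0: m_2(\mathrm{T})<K\}.
\]
Taking infima, the infimum over the larger set is smaller, so $\mathrm{T}_{20}=\inf A_K^{(2)}\le \inf A_K^{(1)}=\mathrm{T}_{10}$.

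The only delicate point is existence. I would interpret the statement under the convention $\inf\emptyset=+\infty$. If $\mathrm{T}_{10}$ exists, meaning $A_K^{(1)}\neq\emptyset$, then $A_K^{(2)}\neq\emptyset$ as well, so $\mathrm{T}_{20}$ exists and the inequality holds. If $\mathrm{T}_{10}$ does not exist, then $\mathrm{T}_{10}=+\infty$ and the inequality holds trivially. Neither step needs $\Delta\mu>0$ beyond its use in Proposition \ref{prop : T_0 general}, which is assumed throughout the section.

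I would also note why the hypothesis $\fa_1=\fa_2$ is needed. It makes the threshold level $K$ the same for both systems. Without it, the larger mean $\fa_1$ would raise $K$ for system 1 and could reverse the comparison. With $\fa_1=\fa_2$ the argument is a direct set-inclusion argument.
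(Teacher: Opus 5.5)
Your proof is correct and matches the paper's argument: the common constant $K$, the inclusion $\{m_1<K\}\subseteq\{m_2<K\}$ obtained from the mrl order, and monotonicity of the infimum under inclusion. Your handling of the case where $\mathrm{T}_{10}$ does not exist, via the convention $\inf\emptyset=+\infty$, is a small addition that the paper leaves implicit.
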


\begin{proof}
Under the assumptions, we have $K_1 = K_2 = K$, where $K_i$ is the constant defined in \eqref{formule K} for system $i$, $i=1,2$. Moreover, the relation $X_{11} \geq_{\mathrm{mrl}} X_{21}$ implies that, for any $\mathrm{T}>0$, 
\[
m_1(\mathrm{T}) < K \quad \Rightarrow \quad m_2(\mathrm{T}) < K.
\]
Consequently,
\[
\left\{ \mathrm{T}>0 : m_1(\mathrm{T}) < K \right\} \subseteq \left\{ \mathrm{T}>0 : m_2(\mathrm{T}) < K \right\}.
\]
Taking infima on both sets yields
\[
\inf \left\{ \mathrm{T}>0 : m_1(\mathrm{T}) < K \right\} \geq \inf \left\{ \mathrm{T}>0 : m_2(\mathrm{T}) < K \right\},
\]
which proves the result.
\end{proof}

Proposition \ref{prop: T0 order} shows that even when the principal units have the same mean lifetime, a larger mean residual life leads to a larger maintenance threshold time. Hence, the system whose main unit ages more favorably allows preventive maintenance to be postponed.

\subsection{Stochastic ordering for the comparison of optimal maintenance times}

Let $\mathrm{T}_i^{*}$ denote the optimal maintenance time of system $i$, and let $K_i$ denote the constant associated with system $i$ and defined in \eqref{formule K}, for $i=1,2$. The following results establish conditions under which
\[
\mathrm{T}_1^{*} \geq \mathrm{T}_2^{*}.
\]

\begin{proposition}\label{prop: T* comp BFR}
Assume that $X_{11}=_{\mathrm{st}} X_{21}$, that their common distribution belongs to the $\mathrm{BFR}$ class, and that their hazard rate satisfies
\begin{align}\label{condition for T* comparison}
\lim_{t\rightarrow \infty} r(t) >  \frac{1}{K_1 \wedge K_2}.
\end{align}
Under these conditions, the optimal preventive maintenance times $\mathrm{T}_1^*$ and $\mathrm{T}_2^*$ exist. Moreover, $\mathrm{T}_1^{*} \geq \mathrm{T}_2^{*}$ if and only if $\mu_{21}\mu_{12} \geq \mu_{11} \mu_{22}$.
\end{proposition}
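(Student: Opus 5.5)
Since both systems share the main-unit distribution, they share $F$, $r$, $m$, $\fa$, $\tmin$ and the function $\varphi$ in \eqref{eq : varphi}. The systems differ only through the constants $K_i=\fa\,\Delta_i\mu/\mu_{i1}$ and the levels $c_i=\mu_{i1}/\Delta_i\mu$, with $K_i=\fa/c_i$.

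The plan has three steps.

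\emph{Step 1: existence.} Condition \eqref{condition for T* comparison} gives $\lim_{t\to\infty} r(t)>1/K_i$ for $i=1,2$. Together with $\Delta_i\mu>0$ and $X_{i1}\in\mathrm{BFR}$, Proposition \ref{prop : T opt} applies to each system. Hence $\mathrm{T}_i^*$ exists, and
\[
\mathrm{T}_i^*=\inf\{\mathrm{T}\ge\tmin:\varphi(\mathrm{T})=c_i\}.
\]
The lower bound $\max\{\mathrm{T}_{i0},\tmin\}$ may be replaced by $\tmin$. This is because $\varphi\le 1<c_i$ on $[0,\tmin]$, and the proof of Proposition \ref{prop : T opt} shows that the first crossing after $\tmin$ is the global maximizer.

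\emph{Step 2: comparison of the levels.} On $[\tmin,\infty)$ the function $\varphi$ is continuous and increasing, starts at $\varphi(\tmin)\le 1$, and tends to a limit exceeding both $c_i$. The first-hitting map $c\mapsto\inf\{\mathrm{T}\ge\tmin:\varphi(\mathrm{T})=c\}$ is therefore increasing in $c$. If $c_1\ge c_2$, any $\mathrm{T}$ with $\varphi(\mathrm{T})=c_1$ satisfies $\varphi(\mathrm{T})\ge c_2$. By the intermediate value theorem there is an earlier or equal point where $\varphi=c_2$, so $\mathrm{T}_2^*\le\mathrm{T}_1^*$. The levels compare as follows:
\[
c_1\ge c_2 \iff \frac{\mu_{11}}{\mu_{11}-\mu_{12}}\ge\frac{\mu_{21}}{\mu_{21}-\mu_{22}}.
\]
Using $\Delta_i\mu>0$ and cross-multiplying, this is equivalent to $\mu_{11}\mu_{21}-\mu_{11}\mu_{22}\ge\mu_{21}\mu_{11}-\mu_{21}\mu_{12}$, that is, $\mu_{21}\mu_{12}\ge\mu_{11}\mu_{22}$. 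This gives sufficiency.

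\emph{Step 3: necessity.} This is the main obstacle. Suppose $c_1<c_2$. By the monotonicity in Step 2, $\mathrm{T}_1^*\le\mathrm{T}_2^*$, but strict inequality can fail if $\varphi$ is flat at the level between $c_1$ and $c_2$. The converse holds as stated only if $\varphi$ cannot take both values at the same point; in fact it cannot, since $\varphi(\mathrm{T}_1^*)=c_1\ne c_2=\varphi(\mathrm{T}_2^*)$ forces $\mathrm{T}_1^*\ne\mathrm{T}_2^*$.

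To settle this I will first check continuity of $\varphi$ at the infimum. Since $\varphi$ is continuous, the infimum of the closed set $\{\varphi=c_i\}\cap[\tmin,\infty)$ is attained, so $\varphi(\mathrm{T}_i^*)=c_i$. Then $c_1<c_2$ gives $\mathrm{T}_1^*\ne\mathrm{T}_2^*$, and combined with $\mathrm{T}_1^*\le\mathrm{T}_2^*$ this yields $\mathrm{T}_1^*<\mathrm{T}_2^*$. Taking the contrapositive, $\mathrm{T}_1^*\ge\mathrm{T}_2^*$ forces $c_1\ge c_2$, which is equivalent to $\mu_{21}\mu_{12}\ge\mu_{11}\mu_{22}$.
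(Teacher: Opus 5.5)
Your proof is correct and follows the paper's argument. Both reduce the question to comparing the levels $\mu_{i1}/\Delta_i\mu=\fa/K_i$ at which the common function $\varphi$, increasing on $[\tmin,\infty)$, is first hit, and then turn $K_1\le K_2$ into $\mu_{21}\mu_{12}\ge\mu_{11}\mu_{22}$ by cross-multiplying. Your Step 3 is slightly more careful than the paper, which only says that $\mathrm{T}_i^*$ is decreasing in $K_i$: noting that $\varphi(\mathrm{T}_i^*)=c_i$ by continuity is exactly what gives the strict inequality needed for the ``only if'' direction.
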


\begin{proof}
Let $\overline{F}$, $r$, $m$, and $\fa$ denote the survival function, the hazard rate, the mean residual life function, and the expectation of $X_{11}$ (and thus also of $X_{21}$). By \eqref{condition for T* comparison} and Proposition \ref{prop : T opt}, the optimal preventive maintenance times $\mathrm{T}_1^*$ and $\mathrm{T}_2^*$ exist. Furthermore,
\[
\mathrm{T}_i^* = \inf\left\{\mathrm{T} \ge \tmin : \varphi(\mathrm{T}) = \frac{\mu_{i1}}{\Delta_i \mu}\right\},
\]
where $\Delta_i \mu = \mu_{i1}-\mu_{i2}$. Using the identity
\[
\varphi(t)=\fa\, r(t)-m'(t)\overline F(t), \quad t\geq 0,
\]
the condition $\varphi(\mathrm{T}) = \frac{\mu_{i1}}{\Delta_i \mu}$ can be rewritten as
\[
r(t)-\frac{m'(t)\overline F(t)}{\fa}=\frac{1}{K_i},
\]
where $K_i = \fa \left( 1 - \frac{\mu_{i2}}{\mu_{i1}}\right)$.

A direct calculation shows that $K_1 \le K_2$ if and only if $\mu_{21}\mu_{12} \ge \mu_{11}\mu_{22}$. Since $\varphi$ is increasing on $[\tmin,\infty)$, the function $t \mapsto \frac{\varphi(t)}{\fa} = r(t)-\frac{m'(t)\overline F(t)}{\fa}$ is also increasing on $[\tmin,\infty)$. Therefore, $\mathrm{T}_i^*$ is decreasing with $K_i$. Consequently,
\[
\mathrm{T}_1^* \ge \mathrm{T}_2^* \quad \Leftrightarrow \quad K_1 \le K_2 \quad \Leftrightarrow  \quad \mu_{21}\mu_{12} \ge \mu_{11}\mu_{22}. 
\]
\end{proof}

If $\mu_{11}\leq \mu_{21}$ and $\mu_{12} \geq \mu_{22}$, then condition $\mu_{21}\mu_{12} \ge \mu_{11}\mu_{22}$ is satisfied. If, in addition, $X_{12} =_{\mathrm{st}} X_{22}$ and their common distribution is exponential with rate $\lambda$, then the inequalities
\[
\mu_{11}\leq \mu_{21}, \quad \mu_{21} \geq \mu_{22} \quad \text{and} \quad \Delta_i \mu \geq 0, \, i=1,2,
\]
can be rewritten as
\[
\widehat{\overline{G}}_{21}(\lambda)\geq \widehat{\overline{G}}_{11}(\lambda) \geq \widehat{\overline{G}}_{12}(\lambda)\geq \widehat{\overline{G}}_{22}(\lambda). 
\]
This condition holds whenever $Y_{22} \leq_{\mathrm{Lt}} Y_{12} \leq_{\mathrm{Lt}}  Y_{11}\leq_{\mathrm{Lt}} Y_{21}$.

\begin{corollary}\label{corol: T* comp LT BFR}
Under the conditions of Proposition \ref{prop: T* comp BFR}, if $X_{12} =_{\mathrm{st}} X_{22}$ and 
\[
Y_{22} \leq_{\mathrm{Lt}} Y_{12} \leq_{\mathrm{Lt}} Y_{11}\leq_{\mathrm{Lt}} Y_{21},
\] 
then $\mathrm{T}_1^{*} \geq \mathrm{T}_2^{*}$.
\end{corollary}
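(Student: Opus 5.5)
The plan is to reduce the corollary to the sufficient condition already identified before its statement, and then invoke Proposition \ref{prop: T* comp BFR}. Since the hypotheses of that proposition are assumed, both optimal times $\mathrm{T}_1^*$ and $\mathrm{T}_2^*$ exist. It therefore suffices to verify the inequality $\mu_{21}\mu_{12}\ge \mu_{11}\mu_{22}$.

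First, I would use $X_{12}=_{\mathrm{st}}X_{22}$, exponential with common rate $\lambda$. For $i,j=1,2$ this gives
\[
\mu_{ij}=\mathbb{E}[X_{i2}\wedge Y_{ij}]=\int_0^\infty \mathrm{e}^{-\lambda t}\,\overline{G}_{ij}(t)\,\mathrm{d}t=\widehat{\overline{G}}_{ij}(\lambda),
\]
as in the proof of Proposition \ref{mttf comparison 3}. Evaluating the chain of Laplace transform orders at $s=\lambda$ then yields
\[
\widehat{\overline{G}}_{22}(\lambda)\le \widehat{\overline{G}}_{12}(\lambda)\le \widehat{\overline{G}}_{11}(\lambda)\le \widehat{\overline{G}}_{21}(\lambda),
\]
that is, $\mu_{22}\le\mu_{12}\le\mu_{11}\le\mu_{21}$.

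Next, from $\mu_{11}\le\mu_{21}$ and $\mu_{22}\le\mu_{12}$, with all quantities nonnegative, multiplying the two inequalities gives $\mu_{11}\mu_{22}\le \mu_{21}\mu_{12}$. Proposition \ref{prop: T* comp BFR} then gives $\mathrm{T}_1^*\ge \mathrm{T}_2^*$.

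There is no real obstacle here. The only point to check is that the middle inequality $Y_{12}\le_{\mathrm{Lt}}Y_{11}$ is compatible with the standing assumption $\Delta_1\mu>0$; it gives $\Delta_1\mu\ge0$, and strict positivity is already assumed. Condition \eqref{condition for T* comparison} involves $K_i$, which is well defined under these assumptions.
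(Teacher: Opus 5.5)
Your proposal is correct and follows essentially the same route as the paper: you use the common exponential standby lifetime to write $\mu_{ij}=\widehat{\overline{G}}_{ij}(\lambda)$, read off $\mu_{22}\le\mu_{12}\le\mu_{11}\le\mu_{21}$ from the Laplace transform chain at $s=\lambda$, conclude $\mu_{21}\mu_{12}\ge\mu_{11}\mu_{22}$, and invoke the sufficiency direction of Proposition~\ref{prop: T* comp BFR}. Your intermediate inequality $\mu_{12}\ge\mu_{22}$ is the correct one; the paper's remark just before the corollary misprints it as $\mu_{21}\ge\mu_{22}$.
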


When corresponding units in both systems have identical lifetime distributions and the hazard rate of the main units has a bathtub shape, the following interpretation holds. If repairs in system 1 are faster than in system 2 in the Laplace transform order, then the optimal maintenance time can be larger for system 1, even if its maintenance times are longer than those of system 2. 

The proof of the following proposition is analogous to that of Proposition \ref{prop: T* comp BFR} and is therefore omitted.

\begin{proposition} \label{prop: T* comp UBFR}
Assume that $X_{11}=_{\mathrm{st}} X_{21}$, that their common distribution belongs to the $\mathrm{UBFR}$ class, and that their common hazard rate satisfies
\[
r(0)\leq \frac{1}{\fa} \quad \text{and} \quad r(c) > \frac{1}{K_1 \wedge K_2},
\]
where $c$ denotes the minimizer of their mean residual life function. Under these conditions, the optimal preventive maintenance times $\mathrm{T}_1^*$ and $\mathrm{T}_2^*$ exist. Moreover, $\mathrm{T}_1^{*} \geq \mathrm{T}_2^{*}$ if and only if $\mu_{21}\mu_{12} \geq \mu_{11} \mu_{22}$.
\end{proposition}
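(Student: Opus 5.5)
The plan is to follow the proof of Proposition \ref{prop: T* comp BFR}, replacing Proposition \ref{prop : T opt} by Proposition \ref{prop: T opt - UBFR}. Let $\overline{F}$, $r$, $m$, $\fa$, $c$ and $\tmax$ denote the common quantities of $X_{11}$ and $X_{21}$. We have $r(c)>\frac{1}{K_1\wedge K_2}\geq \frac{1}{K_i}$. If $r(0)<\frac1\fa$, Proposition \ref{prop: T opt - UBFR} applies to each system and yields existence of
\[
\mathrm{T}_i^*=\inf\left\{\mathrm{T}\in(\mathrm{T}_{i0},c):\varphi(\mathrm{T})=\frac{\mu_{i1}}{\Delta_i\mu}\right\},\quad i=1,2 .
\]
The boundary case $r(0)=\frac1\fa$ falls under Lemma \ref{Lemma Olcay UBFR} $a)$ (IMRL). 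There $m(c)=m(0)=\fa>K_i$, which contradicts $m(c)r(c)=1$ together with $r(c)>1/K_i$. So this case is vacuous, or $c=0$ is degenerate, and I will remark on it briefly.

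Next, as in the BFR case, I divide by $\fa$ and rewrite $\varphi(\mathrm{T})=\mu_{i1}/\Delta_i\mu$ as $\varphi(\mathrm{T})/\fa=1/K_i$, with $K_i=\fa(1-\mu_{i2}/\mu_{i1})$. The identity $K_1\le K_2\iff \mu_{21}\mu_{12}\ge\mu_{11}\mu_{22}$ is the same direct computation as before.

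For the monotone dependence, the key fact is that on $[0,\tmax]$ the function $\varphi$ is increasing, with $\varphi(0)=1<\mu_{i1}/\Delta_i\mu$. I also need $c\le\tmax$, which Lemma \ref{Lemma Olcay UBFR} $b)$ gives since $c\in(0,\tmax)$. Hence, for each level $L>1$ attained on $(0,c)$, the first hitting time $\inf\{\mathrm{T}>0:\varphi(\mathrm{T})=L\}$ lies in $(0,c)$ and is increasing in $L$. I will also check that this first hitting time is at least $\mathrm{T}_{i0}$, so that the restriction to $(\mathrm{T}_{i0},c)$ in the formula for $\mathrm{T}_i^*$ does not change the infimum. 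This follows from the proof of Proposition \ref{prop: T opt - UBFR}, where $\mathrm{T}^*$ is the infimum over $(0,c)$ and is the global maximizer, so $\mathrm{T}^*\ge\mathrm{T}_0$. Therefore $\mathrm{T}_i^*$ is a decreasing function of $K_i$, and $\mathrm{T}_1^*\ge \mathrm{T}_2^*$ if and only if $K_1\le K_2$.

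The main obstacle is the "only if" direction when $\varphi$ is not strictly increasing, since flat pieces could make different levels give equal hitting times. I would handle it as in Proposition \ref{prop: T* comp BFR}: take $\mathrm{T}^*$ as the first hitting time, and note that for $L_1<L_2$ with $\varphi$ continuous the first hitting time of $L_1$ is strictly smaller than that of $L_2$. Then $K_1>K_2$ forces $\mathrm{T}_1^*<\mathrm{T}_2^*$, which gives the equivalence.
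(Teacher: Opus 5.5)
Your proposal is correct and follows the paper's omitted argument: transplant the proof of Proposition \ref{prop: T* comp BFR}, use Proposition \ref{prop: T opt - UBFR} for existence, and use the monotonicity of $\varphi$ on $[0,\tmax]\supseteq[0,c]$ to see that $\mathrm{T}_i^*$ is decreasing in $K_i$. Your extra remarks fill gaps the paper leaves implicit: the boundary case $r(0)=1/\fa$ is vacuous, because IMRL forces $m(c)=\fa>K_i$, which contradicts $r(c)>1/K_i$; and the intermediate value theorem makes the first hitting time strictly increasing in the level, which handles the ``only if'' direction.
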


The following result can be obtained using the same arguments as in Corollary \ref{corol: T* comp LT BFR}, and its interpretation is analogous.

\begin{corollary}
Under the conditions of Proposition \ref{prop: T* comp UBFR}, if $X_{12} =_{\mathrm{st}} X_{22}$ and 
\[
Y_{22} \leq_{\mathrm{Lt}} Y_{12} \leq_{\mathrm{Lt}}  Y_{11}\leq_{\mathrm{Lt}} Y_{21},
\]
then $\mathrm{T}_1^{*} \geq \mathrm{T}_2^{*}$.
\end{corollary}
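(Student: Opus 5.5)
The plan is to follow the pattern of the proof of Corollary \ref{corol: T* comp LT BFR}, replacing Proposition \ref{prop: T* comp BFR} by Proposition \ref{prop: T* comp UBFR}. The hypotheses of Proposition \ref{prop: T* comp UBFR} are assumed, so both $\mathrm{T}_1^*$ and $\mathrm{T}_2^*$ exist. It therefore suffices to verify
\[
\mu_{21}\mu_{12} \geq \mu_{11}\mu_{22},
\]
since by that proposition this inequality is equivalent to $\mathrm{T}_1^*\geq \mathrm{T}_2^*$.

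First, since $X_{12}=_{\mathrm{st}}X_{22}$ is exponential with common rate $\lambda$, each $\mu_{ij}$ is a Laplace transform. As in the proof of Proposition \ref{prop: comp M lambda},
\[
\mu_{ij}=\mathbb{E}[X_{i2}\wedge Y_{ij}]=\int_0^\infty \mathrm{e}^{-\lambda t}\,\overline{G}_{ij}(t)\,\mathrm{d}t=\widehat{\overline{G}}_{ij}(\lambda).
\]
The chain $Y_{22}\leq_{\mathrm{Lt}} Y_{12}\leq_{\mathrm{Lt}} Y_{11}\leq_{\mathrm{Lt}} Y_{21}$, evaluated at $s=\lambda$ via Definition \ref{stoch ord}$\,b)$, gives
\[
\mu_{22}\leq \mu_{12}\leq \mu_{11}\leq \mu_{21}.
\]

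From this chain I extract $\mu_{11}\leq\mu_{21}$ and $\mu_{22}\leq\mu_{12}$. All $\mu_{ij}$ are nonnegative, so multiplying the two inequalities gives $\mu_{11}\mu_{22}\leq \mu_{21}\mu_{12}$, which is the required condition. Proposition \ref{prop: T* comp UBFR} then yields $\mathrm{T}_1^*\geq\mathrm{T}_2^*$.

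The same chain also gives $\Delta_i\mu\geq 0$ for $i=1,2$, consistent with the standing assumption $\Delta_i\mu>0$ of this section. The only point I expect to need care is making sure the ordering direction matches. The Laplace transform order is stated for survival functions, and $\mu_{ij}$ is exactly the transform of $\overline{G}_{ij}$ at $\lambda$, so "$\leq_{\mathrm{Lt}}$" translates directly into "$\leq$" for the $\mu$'s with no sign reversal. Beyond that check, the argument is a one-line consequence of the proposition.
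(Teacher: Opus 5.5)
Your proposal is correct and is essentially the paper's argument. The paper likewise reads the Laplace-transform chain at $s=\lambda$ as $\widehat{\overline{G}}_{21}(\lambda)\geq \widehat{\overline{G}}_{11}(\lambda)\geq \widehat{\overline{G}}_{12}(\lambda)\geq \widehat{\overline{G}}_{22}(\lambda)$, i.e.\ $\mu_{22}\le\mu_{12}\le\mu_{11}\le\mu_{21}$, then deduces $\mu_{21}\mu_{12}\ge\mu_{11}\mu_{22}$ and invokes Proposition \ref{prop: T* comp UBFR}.
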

\section{Conclusions}\label{coclusiones}
The main contribution of this study is the derivation of necessary and sufficient conditions for the existence of an optimal preventive maintenance time in a two-unit priority standby system with repair, when the main unit follows a bathtub-shaped or an upside-down bathtub-shaped hazard rate. Expressions to determine a threshold time beyond which maintenance improves the mean time to system failure and to determine the optimal maintenance time are provided  in terms of the hazard rate and the mean residual life function.  Moreover, stochastic ordering techniques allowed comparisons between two independent systems under this framework, including their mean lifetimes, threshold times and optimal maintenance times. Numerical examples illustrate the impact of the hazard rate shape on preventive maintenance decisions.

A possible direction for future research is to extend the present results to a more general framework in which the standby unit follows a general lifetime distribution.

{\small
\bibliographystyle{apalike}
\bibliography{references}}

\end{document}